%\pdfoutput=1
\documentclass[final]{siamart1116}

\usepackage{amsmath}
\usepackage[sort&compress,square,numbers]{natbib}
\setcitestyle{square,citesep{;}}
\usepackage{graphicx}
\usepackage{amsfonts}
\usepackage{mathrsfs}
\usepackage{algorithmic}
\usepackage{algorithm}
\usepackage{caption}
\usepackage{tikz}
\usepackage{units}
\usetikzlibrary{matrix}
\usetikzlibrary{shapes.geometric}
\usetikzlibrary{shapes.misc}
\usetikzlibrary{decorations.pathmorphing}
\usetikzlibrary{arrows}
\usetikzlibrary{backgrounds}
\usetikzlibrary{positioning}
\usetikzlibrary{fadings}

% For commenting. 
%\usepackage{comment}
%\specialcomment{com}{\begingroup\ttfamily\footnotesize\color{red}}{\endgroup}
%% To remove all comments
% \excludecomment{com}
%\usepackage[hidelinks]{hyperref}
%\usepackage{cleveref}

\newtheorem{thm}{Theorem}[section]

%%%%%%%%%%%%%%%%%%%%%%%%%%%%%%%%%%%%%%%%%%%%%%%%%%%%%%%%%%%%%%%%%%%%%%%%%%%%%%%%
%%%%%%%%%%%%%%%%%%%%%            John's Header            %%%%%%%%%%%%%%%%%%%%%%
%%%%%%%%%%%%%%%%%%%%%%%%%%%%%%%%%%%%%%%%%%%%%%%%%%%%%%%%%%%%%%%%%%%%%%%%%%%%%%%%
\def\bm#1{\mbox{\boldmath$#1$}}
\newcommand{\eqdef}{\stackrel{\rm def}{=}}

\newcommand{\bb}{{\bf b}}

\newcommand{\Lb}{{\bf L}}

\newcommand{\zerob}{{\bf 0}}

\newcommand{\epsb}{\bm{\epsilon}}
\newcommand{\etab}{\bm{\eta}}
\newcommand{\R}{\mathbb{R}}
\newcommand{\N}{\mathcal{N}}

%%%%%%%%%%%%%%%%%%%%%%%%%%%%%%%%%%%%%%%%%%%%%%%%%%%%%%%%%%%%%%%%%%%%%%%%%%%%%%%%
%%%%%%%%%%%%%%%%%%%%%%%%%%%%%%%%%%%%%%%%%%%%%%%%%%%%%%%%%%%%%%%%%%%%%%%%%%%%%%%%

\DeclareMathOperator*{\argmin}{arg\,min}
\renewcommand{\bar}{\overline}
\renewcommand{\hat}{\widehat}

\newcommand{\eps}{\varepsilon}

\newcommand{\Gb}{{\bf G}}

\newcommand{\vect}[1]{\bm{#1}}

\begin{document}

\title{
  {Point Spread Function Estimation in X-ray Imaging with Partially Collapsed Gibbs Sampling}
  \thanks{Submitted to the editors 25 September 2017.}
}

\author{
  Kevin T.~Joyce\thanks{Signal Processing and Applied Mathematics, Nevada National Security Site, P.O. Box 98521, M/S NLV078, Las Vegas, NV, 89193-8521, USA (\email{joycekt@nv.doe.gov}, \email{LuttmaAB@nv.doe.gov}).}, 
  \and Johnathan M.~Bardsley\thanks{Department of Mathematical Sciences, University of Montana, Missoula, MT, 59812, USA (\email{bardsleyj@mso.umt.edu}).} 
  \and Aaron Luttman\footnotemark[2]
}

\maketitle

\begin{abstract}
  The point spread function (PSF) of a translation invariant imaging system is its impulse response, which cannot always be measured directly.
  This is the case in high energy X-ray radiography, and it must be estimated from images of calibration objects indirectly related to the impulse response.
  When the PSF is assumed to have radial symmetry, it can be estimated from an image of an opaque straight edge.
  We use a non-parametric Bayesian approach, where the prior probability density for the PSF is modeled as a Gaussian Markov random field and radial symmetry is incorporated in a novel way.
  Markov Chain Monte Carlo posterior estimation is carried out by adapting a recently developed improvement to the Gibbs sampling algorithm, referred to as partially collapsed Gibbs sampling.
  Moreover, the algorithm we present is proven to satisfy invariance with respect to the target density.
  Finally, we demonstrate the efficacy of these methods on radiographic data obtained from a high-energy X-ray diagnostic system at the U.S.~Department of Energy's Nevada National Security Site.
\end{abstract}

\begin{keywords}
Inverse Problems; Computational Imaging; Uncertainty Quantification; Bayesian Inference; Markov Chain Monte Carlo Methods
\end{keywords}

\begin{AMS}
  65C05, 65C40, 68U10
\end{AMS}

\section{Introduction}\label{sec:intro}
  Image enhancement and reconstruction is often framed within the model
  \begin{equation}\label{eq:linearInverseProblem}
  b = Ax + \varepsilon
  \end{equation}
  where $A$ is a model operator that maps a quantity of interest, $x$, to measured data, $b$, which is subject to additive measurement noise given by $\varepsilon$.
  A ubiquitous example is image deconvolution; where $x$ is an ideal un-blurred image; $b$ is the blurred data which has been corrupted by additive measurement error $\varepsilon$; and $A$ is a convolution operator whose point response is referred to as the point spread function (PSF). 
  In situations where the PSF is unknown, the same model may be used to solve the dual problem: estimate the PSF with a known image derived from some kind of calibration.
  The estimation of the PSF has its own intrinsic importance beyond its use in deconvolution, since an accurate estimate of the PSF with meaningful quantification of uncertainty serves as a useful diagnostic of the imaging system. 
  For instance, a drastic increase in the width of the PSF might indicate a malfunction in the system.

  To be more specific, the inverse problem is to estimate the PSF of $A$, say $p$, from a known calibration image $x$ which we think of as operating on $p$. 
  Expressing the convolution in \eqref{eq:linearInverseProblem} in terms of $p$, 
  \begin{equation}\label{eq:convolutionProblem}
    b = x * p + \varepsilon,
  \end{equation} 
  where `$*$' denotes the convolution operation and $Ax \eqdef x*p$.
  A direct estimate of $p$ would be available if $x$ were to represent an impulse response or point source, but in many cases, this is not feasible.
  This is acutely the case in high-energy X-ray radiography, where due to physical limitations, an impulse response cannot be obtained from calibration imagery.
  Instead, we use a vertical aperture to produce an opaque profile of an edge to estimate $p$ from the resulting integral equation.
  Several established methods use exactly this type of PSF estimation, but with parametric forms of $p$ derived from modeling the physics of the system \citep{hasen2006}. 
  These methods are often non-linear (exacerbating difficulties in the estimation and quantification of uncertainty), and parametric forms that can be solved are often not adequate to capture an accurate representation of blur that is the result of many components that act in aggregate.

  This work takes a Bayesian approach to estimation by modeling $p$ as a stochastic quantity where our a priori uncertainty is modeled with a Gauss-Markov random field.
  We incorporate measured data using a posteriori analysis, where we've modeled the measurement error with an additive likelihood model as in \eqref{eq:convolutionProblem}.
  Additionally, the prior modeling for $p$ and the parameters defining it and the measurement error are done in a hierarchical fashion, as in \cite{bardsley2012mcmc}, so that a Gibbs sampling scheme is readily applicable.
  It has been shown that this hierarchy in certain circumstances can result in highly correlated Markov chains when discretization levels limit toward the continuum \cite{agapiou2014analysis}.
  We present methods that alleviate this correlation and show that this provides effectively uncorrelated samples at an equivalent computational effort.
  %As noted in \cite{van2008partially}, this process must be done with care as it may result in a sampler that is no longer proper in the sense that the invariant distribution of the Markov chain is no longer the desired target posterior; hence, we provide a proof of stationarity as well as insights on how to maintain stationarity in a Bayesian hierarchical model setting.
  Moreover, our model for the PSF provides a new method for encapsulating radial symmetry in a Gauss-Markov random field, by developing a one-dimensional precision operator that acts on the radial profile of the PSF.
  We also provide an analysis of the algorithm's convergence and computational efficiency on real and synthetic data that indicates significant efficiencies over standard Gibbs sampling as well as improvements to other newly developed methods.
  %These results are presented in \Cref{sec:results}, and the real data is obtained radiographic imaging systems at the U.S.~Department of Energy's Nevada National Security Site and Lawrence Livermore National Lab.  

  In \Cref{sec:modelingImageBlur}, we introduce a novel mathematical model for an isotropic point spread function reconstruction. 
  This results in a linear integral equation, for which the PSF can be estimated non-parametrically by discretizing the integral operator.
  In this section, we describe the hierarchical model for estimating $p$, and using Bayes' Theorem, give an explicit formulation for the posterior density of the quantities of interest.
  \Cref{sec:mcmcAlgorithms} outlines three MCMC approaches to analyzing the posterior. 
  We first outline the standard Gibbs sampling approach, then present a recently studied approach called \emph{marginal then conditional} (MTC) sampling \cite{fox2015fast}, and show it's relationship to Gibbs sampling.
  Then, we present the partial collapsed Gibbs sampler as it applies to hierarchical models, and show how it is related to the Gibbs and MTC samplers.
%  A more rigours development of the process and a proof of the invariance of the sampler, is found in \Cref{sec:stationarityProof}.
  Finally, \Cref{sec:results} compares each algorithm numerically on synthetic data and on actual measured PSF data a from high-energy X-ray imaging system at the U.S. Department of Energy's Nevada National Security Site..

\section{Modeling Image Blur} \label{sec:modelingImageBlur} 
  When image blur is translation invariant, it can be modeled as convolution with a PSF that represents the impulse response of the system \citep{grafakos2014}.
  A direct estimate is available by taking a calibration image representing the impulse response \citep{jain1989,roggemann1996imaging}; for example, in astronomical imaging, it is often estimated by imaging single stars which approximate point sources \citep{fox2015fast,jain1989,roggemann1996imaging}.
  In our applications, imaging a point source is not feasible, so instead, we model the system response from an image of an aperture that retains the extent information of the action of blurring.
  The measurement is inherently indirect, and requires the solution of an inverse problem.
  If the action of the blur is isotropic, then the PSF will be radially symmetric, and it can be estimated as a one-dimensional function of distance.
  More specifically, let $\vect s = (s_1,s_2)\in \R^2$ denote a position in space indexing the intensity of the response of the blurring operator (denote similarly $\vect s'$ for the domain of the PSF). 
  If $k(\vect s')$ is the value of the PSF, then it is given by a function in one variable through the composistion 
  \begin{equation}
    k(\vect s') = p( \|\vect s'\|_{\R^2} )
  \label{eq:radialSymmetryRepresentation}
  \end{equation}
  where $\|\vect s'\|_{\R^2}$ denotes the Euclidean distance in $\R^2$.
  In this case, we use a beveled vertical aperture which produces a uniformly opaque vertical edge at a known fixed location in the imaging plane.
  See \Cref{fig:edgePicture}.

  \begin{figure}[ht] 
    % Xray Cartoon
    \begin{center}
      \begin{tikzpicture}[scale=.8,every node/.style={minimum size=1cm},on grid]
        \def\myxslant{0.1}
        \def\myyslant{-0.4} 
        \begin{scope}[
          xshift=40,
          every node/.append style={
          xslant=\myxslant,yslant=\myyslant},xslant=\myxslant,yslant=\myyslant
          ]
          \draw (0,0) rectangle (2.8,2.2);
          %	\draw[fill=black] (1,0) rectangle (2.8,2.2);
        \end{scope}

        \begin{scope}[
          yshift=-20,
          every node/.append style={xslant=\myxslant,yslant=\myyslant},
          xslant=\myxslant,yslant=\myyslant
          ]
          \draw[x=.314cm,y=.2cm,z=.2cm,thick,-latex,red] (0,0,0)
            sin ++(0,1,1) cos ++(0,-1,1) sin ++(0,-1,1) cos ++(0,1,1)
            sin ++(0,1,1) cos ++(0,-1,1) sin ++(0,-1,1) cos ++(0,1,1)
            sin ++(0,1,1) cos ++(0,-1,1) sin ++(0,-1,1) cos ++(0,1,1);
          \draw[x=.314cm,y=.2cm,z=.2cm,thick,-latex,red] (-2,0,0)
            sin ++(0,1,1) cos ++(0,-1,1) sin ++(0,-1,1) cos ++(0,1,1)
            sin ++(0,1,1) cos ++(0,-1,1) sin ++(0,-1,1) cos ++(0,1,1)
            sin ++(0,1,1) cos ++(0,-1,1) sin ++(0,-1,1) cos ++(0,1,1);
          \draw[x=.314cm,y=.2cm,z=.2cm,thick,-latex,red] (-4,0,0)
            sin ++(0,1,1) cos ++(0,-1,1) sin ++(0,-1,1) cos ++(0,1,1)
            sin ++(0,1,1) cos ++(0,-1,1) sin ++(0,-1,1) cos ++(0,1,1)
            sin ++(0,1,1) cos ++(0,-1,1) sin ++(0,-1,1) cos ++(0,1,1);

          \pgfmathsetmacro{\cubex}{2}
          \pgfmathsetmacro{\cubey}{2.5}
          \pgfmathsetmacro{\cubez}{1}
          \draw[black,fill=gray,opacity=.75] (3.7,3,0) -- ++(-\cubex,0,0) -- ++(0,-\cubey,0) -- ++(\cubex,0,0) -- cycle;
          \draw[black,fill=gray,opacity=.75] (3.7,3,0) -- ++(0,0,-\cubez) -- ++(0,-\cubey,0) -- ++(0,0,\cubez) -- cycle;
          \draw[black,fill=gray,opacity=.75] (3.7,3,0) -- ++(-\cubex,0,0) -- ++(0,0,-\cubez) -- ++(\cubex,0,0) -- cycle;

          \draw[x=.314cm,y=.2cm,z=.2cm,thick,-latex,red] (4,0,0)
            sin ++(0,1,1) cos ++(0,-1,1) sin ++(0,-1,1) cos ++(0,1,1)
            sin ++(0,1,1) cos ++(0,-1,1) sin ++(0,-1,1) cos ++(0,1,1);
          \draw[x=.314cm,y=.2cm,z=.2cm,thick,-latex,red] (2,0,0)
            sin ++(0,1,1) cos ++(0,-1,1) sin ++(0,-1,1) cos ++(0,1,1)
            sin ++(0,1,1) cos ++(0,-1,1) sin ++(0,-1,1) cos ++(0,1,1);
          \end{scope}

          \begin{scope}[
              xshift=180,
              every node/.append style={xslant=\myxslant,yslant=\myyslant},
              xslant=\myxslant,yslant=\myyslant
            ]
            \draw (0,0) rectangle (2.8,2.2);
            \tikzfading[name=fade left,left color = transparent!100,right color = transparent!0]
            \draw[path fading=fade left,fading transform={rotate=-30},fill=black] (.9,0) rectangle (1,2.2);
            \draw[fill=black] (1,0) rectangle (2.8,2.2);
        \end{scope}

        \begin{scope}[
          xshift=300,
          every node/.append style={xslant=\myxslant,yslant=\myyslant},
          xslant=\myxslant,yslant=\myyslant
          ]
          \draw (0,0) rectangle (2.8,2.2);
          \tikzfading[name=fade left,left color = transparent!100,right color = transparent!0]
          \draw[path fading=fade left,fading transform={rotate=-30},fill=black] (.9,0) rectangle (1,2.2);
          \draw[fill=black] (1,0) rectangle (2.8,2.2);
          \draw[step=1mm, black] (0,0) grid (2.8,2.2); %defining grids
        \end{scope}
        % Arrows and Labels
        \node at (2,2.5) (label1) {Opaque Edge};
        \draw[-latex,thick] (2,-2) to node[below] {Image System Response} (7.5,-2);
        \node at (4.65,-3.4) (math1) {$\displaystyle{b(\vect s)=\iint_E {p(\|\vect s-\vect s'\|_{\R^2})}\,d\vect s'}$};

        \node at (7,2.5) (label3) {Blurred Profile};

        \node at (12,2.5) (label2) {Recorded Data};
        \draw[-latex,thick] (9,-2) to node[below]{Measurement error} (13.2,-2);
        \node at (11,-3.4) (math1) {$+\vect \eps \sim N(\vect 0,\lambda^{-1} \vect I)$};
      \end{tikzpicture}
      \caption{ A schematic of the measurement model for an X-Ray image of an edge. An opaque block whose profile is indicated by $E$ blocks light on the half plane to produce a blurred edge.}\label{fig:edgePicture}
      \end{center}
  \end{figure}
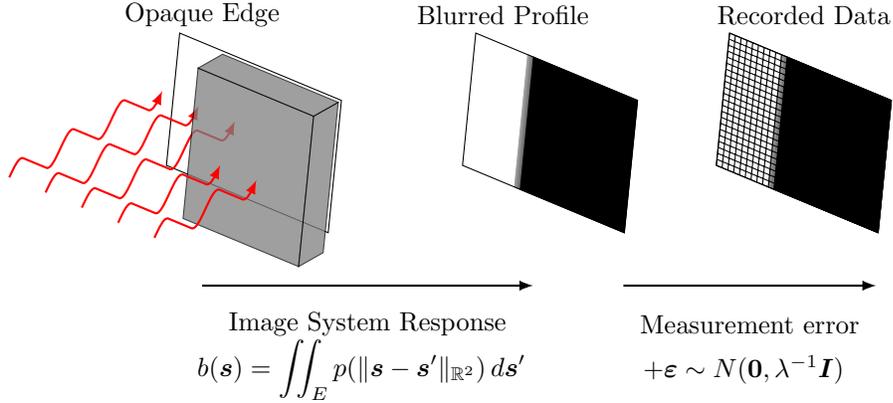

  To model this mathematically, let $E$ denote the half plane, and note the characteristic function, $\chi_E(s_1',s_2')$, depends only on the horizontal coordinate of $s_1'$, so the convolution in \eqref{eq:convolutionProblem} can be written as
  \begin{equation}
    b(\vect s) = \iint p( \|\vect s'\|_{\R^2})\chi_{(0,\infty)}(s_1-s_1')d\vect s' + \varepsilon,
    \label{eq:psfForwardModel}
  \end{equation}
  where $b$ represents the noisy and blurred measured edge, and $p$ is the radial profile of the PSF.
  Since \eqref{eq:psfForwardModel} does not depend on $s_2$ (the edge has vertical translation symmetry), the output of the integral operator in \eqref{eq:psfForwardModel} is only a function of $s_1$.
  Thus, we may represent $b(s_1,s_2) = b(s_1)$, and for the sake of clarity, we denote $b(s_1) = b(s)$.
  Further, denote $r=\|\vect s'\|_{\R^2}$.
  In this way, the inverse problem has been reduced to estimating functions on subsets of $\R$ -- that is, estimate the radial profile of the PSF, $p(r)$, from a horizontal cross-section of an image of a blurred edge, $b(s)$.
  Using the change of variables $s_1' = r\cos v, s_2'= r\sin v$ in \eqref{eq:psfForwardModel} and integrating out the $v$ variable results in the integral operator on the radial profile
  \begin{equation}
    b(s) = \int_0^\infty p(r) g(s,r) r dr, \label{eq:fredholmEquation}
  \end{equation}
  where
  \begin{equation}
    %x\left(\sqrt{{s'}^2 + {t'}^2}\right) = k(s',t')\enskip
    %\mathrm{ and }\enskip
    g(s,r) = \left\{
    \begin{array}{lc}
      0                        &\quad  s  < - r\\
      2(\pi - \cos^{-1}(s/r))  & |s| \le r\\
      2\pi                     &\quad  s  > r.
    \end{array}\right.  \label{g_form}
  \end{equation}
  This situation is illustrated in \Cref{fig:edgePicture,fig:edgeData}.

  Observe that $g(s,r)$ is continuous, but not differentiable (it has a discontinuity in its directional derivatives across the line $r=s$).
  %so (\ref{eq:fredholmEquation}) is a Fredholm equation of the first kind, and therefore a linear inverse problem of the form (\ref{eq:lip}).
  Again, since the operator is compact, its discretization results in a matrix with singular values that cluster near zero \citep{hansen2010}, as evidenced by \Cref{fig:svd}.
  Hence, discrete estimation in the presence of measurement error will be unstable \citep{hansen2010}.
  %
  %This inverse problem is ill-posed.
  %To see this, the integration in \eqref{eq:psfForwardModel} can be taken iteratively (assuming appropriate integrability of $p$ and $b$) first in $s_2'$ only of $p(\|\vect s'\|_{\R^2})$, then in $s_1'$ over the set $(s_1,\infty)$. 
  %The inner integral can be viewed as a one dimensional tomographic projection, sometimes referred to as the Abel transform, followed by the outer integration which is a one dimensional Volterra operator.
  %Since the Volterra operator is known to be  compact, the composition is compact, and the problem of estimating $p$ from $b$ is ill-posed \citep{vogel2002}.
  %See \citep{epstein2008,bracewell} for a derivation and discussion of the Abel transform.
  %One way to regularize the reconstruction problem is to assume a parametric form of the PSF, then solve the composite integral equation of the definite integral and Abel operator, and this has has been investigated by others; see \citep{doering1992,jain1989,kundur1996blind,watson1993}.
  %In this work, we retain the assumption of radial symmetry, but proceed non-parametrically.
  %%The radial profile is a map  $p:(0,\infty)\to \RR$ such that $k(s',t') = x( \sqrt{{s'}^2 + {t'}^2})$, then using the change of variables $s' = r\cos v, t'= r\sin v$ in (\ref{psfForwardModel}) and integrating in $t'$ results in the integral operator on the radial profile
  %
  For such ill-posed problems, prior knowledge about the solution must be incorporated to make the problem well-posed.
  By representing the PSF as a radial profile $p(r)$, the space and geometry for the domain of the model operator must reflect this representation, and prior notions of smoothness of the PSF must be expressed appropriately in this space.
  That is, since $p$ depends on the distance $r=\|\vect s'\|_{\R^2}$, integration-based regularization operators (the viewpoint in \citep{vogel2002}), and precision operators in Gaussian based probabilistic frameworks (the viewpoint in \citep{stuart2010}) will involve a change of variables.
  Both of these methods typically result in solving the penalized least square problem
  \begin{equation} 
    p_{\lambda,\delta}=\argmin_{p}\left\{\lambda \|\mathcal Gp - b\|_{L^2}^2+\delta F(p)\right\}, \label{eq:leastSquareSolution}
  \end{equation}
  where $F$ is the corresponding regularizing norm.
  A discrete version of \eqref{eq:leastSquareSolution} is derived in a probabalistic framework in the next section.

  For our application, imposing Laplacian based smoothness on the PSF is an appropriate prior assumption.
  Hence, if one denotes the 2D Laplacian by $\Delta$, then $r = \sqrt{{s_1}^2 + {s_2}^2}$ implies
  \begin{align}
    %\Delta k
     \Delta(p \circ r)
      %&= \mathrm{div}\Big( \big((Dx) \circ T\big) \nabla T \Big)\nonumber\\
      %&= \Big((D^2 x) \circ T \Big)(\nabla T \cdot \nabla T) + \Big((Dx) \circ T\Big) \Delta T\nonumber\\
      %&= \Big( D^2 x +   r^{-1}\,Dx \Big) \circ T \nonumber\\
     = r^{-1}\cdot \left( \frac d{dr}\left(r \cdot\frac {dp}{dr}\right) \right).
      %\eqdef r^{-1}(R x) \circ T.
  \end{align}
  Note that this is the radial component of the Laplacian in two-dimensional polar coordinates.  
  Denote the differential operator 
  \begin{equation}
    Rp \eqdef \frac d{dr}\left(r \cdot\frac {dp}{dr}\right). \label{eq:radialDifferentialOperator}
  \end{equation}
  Defining $F$ in terms of the $L^2$ inner product, induces a similar change of variables; i.e.
  \begin{align}
      F_\alpha(p) \eqdef \alpha \left\langle p\circ r, \Delta^n (p\circ r)\right\rangle_{L^2} 
     % &= \int_{-\infty}^\infty\int_{-\infty}^\infty p(\|s'-t'\|)\,(\Delta^n k)(s',t')\,ds'dt' \nonumber\\
      &= 2\pi\alpha \int_0^\infty p(r)\cdot\left(R^np(r)\right)\cdot r^{1-n}dr. \label{eq:radialProfilePrior}
  \end{align}
  So, Laplacian regularization of order $n$ smoothness on the PSF induces a regularization operator on its radial representation of the form $r^{1-n} R^n p$.
  A more rigorous development of these notions is carried out in \cite{joyce2016psf}.

  For boundary conditions, we assume regularity of the PSF at the origin so that
  \begin{equation}
    \left.\frac{d}{dr}\right|_{0^+} p(r) = 0.\label{eq:leftContinuousBoundary}
  \end{equation}
  We also assume that the PSF decays away from the origin such that for any $k$
  \begin{equation}
    \lim_{r\to\infty} r^kp(r) = 0,
    \label{eq:rightContinuousBoundary}
  \end{equation} 
  which, when discretized, we assume the imaging field of view is such that the radial profile is sufficiently small in magnitude to assume a zero right boundary condition on the domain of the solution.

  In the probabilistic framework, the solution to \eqref{eq:leastSquareSolution} is equivalent to a maximum a posteriori (MAP) estimate when the PSF is assumed to be a Gaussian, and taking $n=2$ guarantees that the corresponding prior covariance operator is trace class \citep{stuart2010}.
  Since data and estimates are inherently discrete quantities, we proceed by discretizing \eqref{eq:fredholmEquation} and \eqref{eq:radialProfilePrior}.

  \begin{figure}[htbp!]
  \begin{center}
    \includegraphics[width=.45\textwidth]{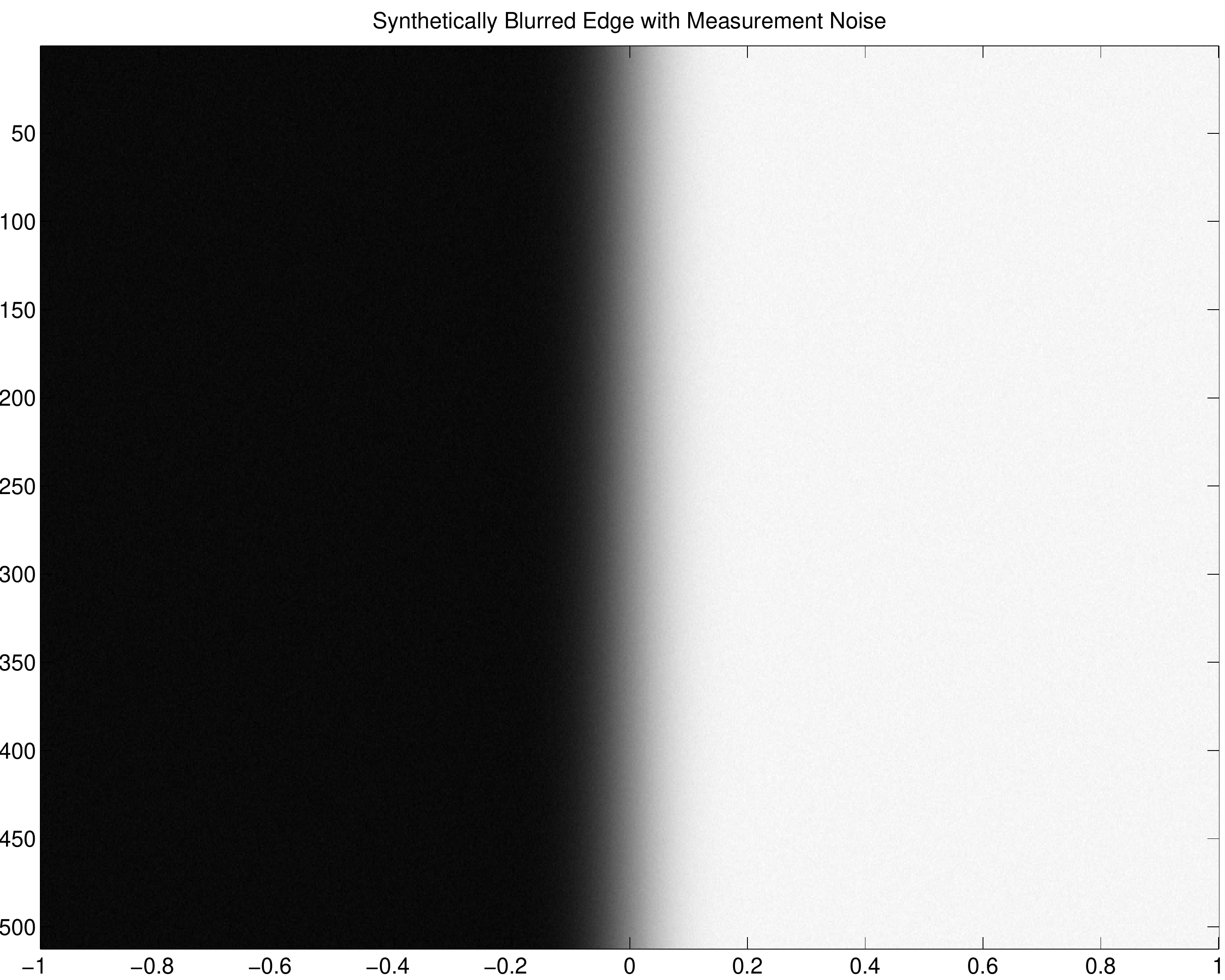}
    \includegraphics[width=.45\textwidth]{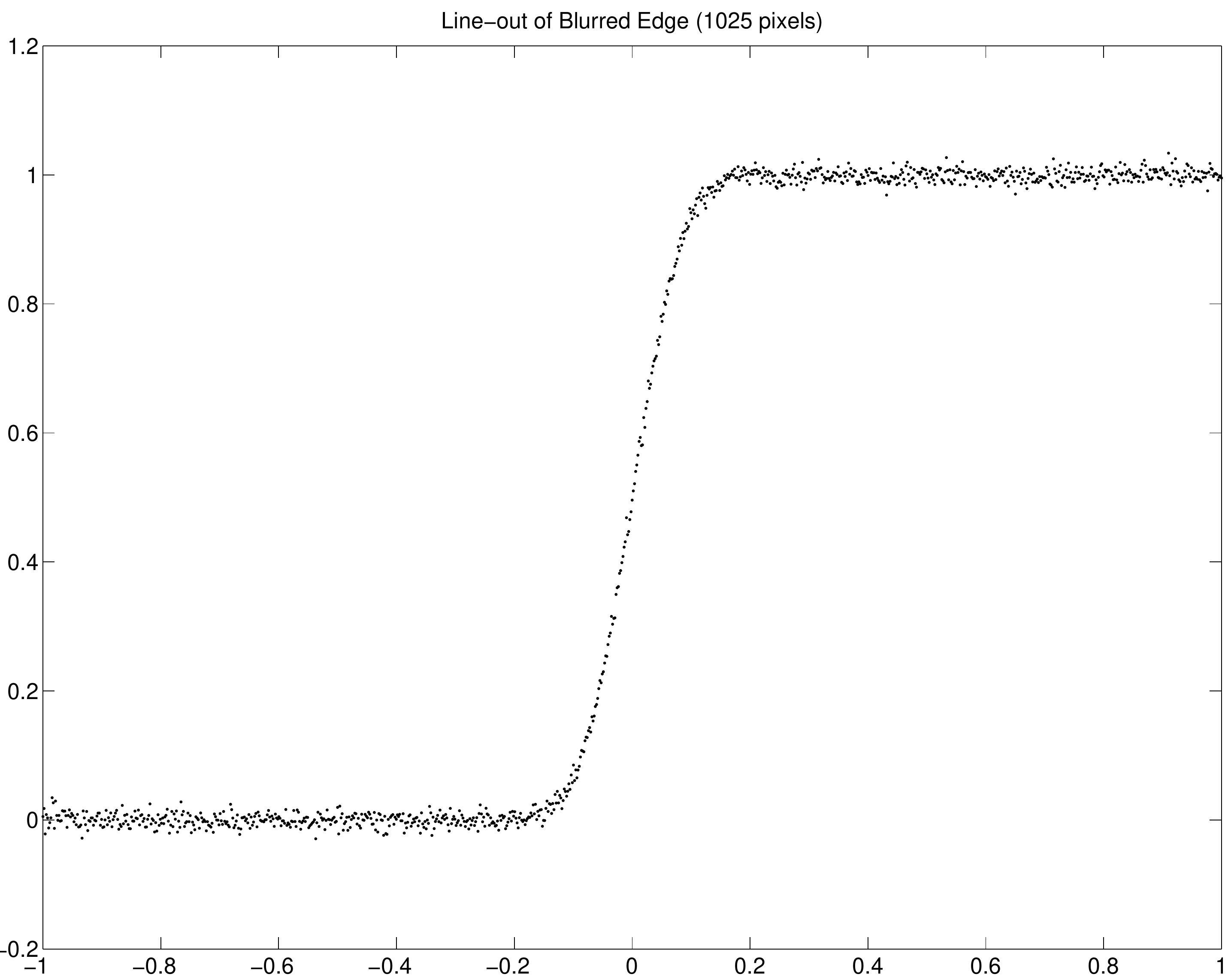}
    \caption{A synthetically blurred edge with simulated measurement error and a line-out (horizontal cross-section) from the data.} \label{fig:edgeData}
  \end{center}
  \end{figure}

  \begin{figure}[htbp!]
  \begin{center}
    \includegraphics[width=.45\textwidth]{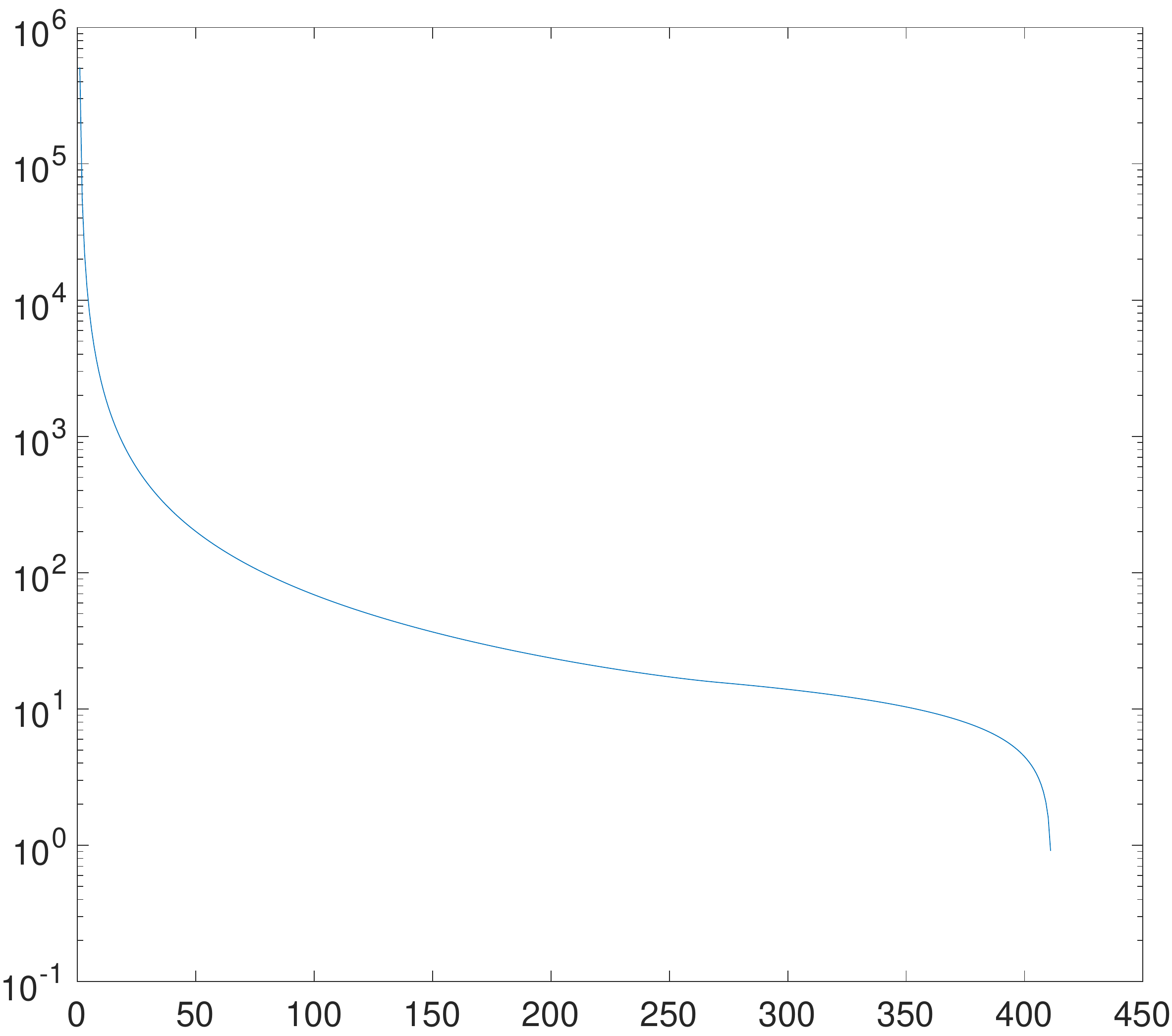}
    \includegraphics[width=.45\textwidth]{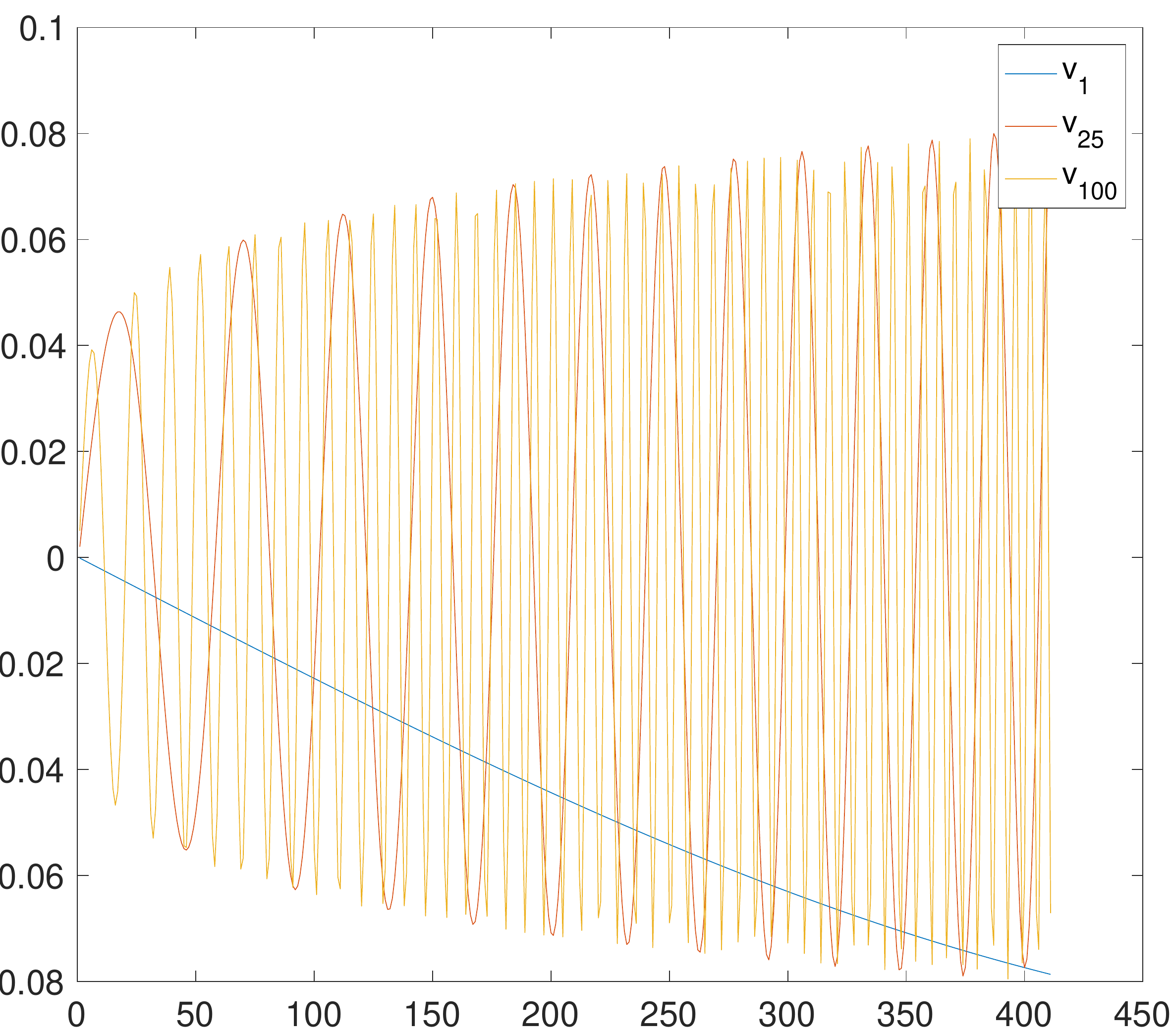}
    \caption{Plots of the singular values and a selection of the right singular vectors of the discretization of the forward edge blur operator $\mathcal G$.} \label{fig:svd}
  \end{center}
  \end{figure}

  \subsection{Numerical discretization} 
    The data are intensity values of image pixels from a fixed horizontal cross-section of the edge, sampled at $M=2N+1$ points $s_i\in [-1,1]$ with $s_i = i/N$ and $-N\le i \le N$ such that $s_0$.
    Denote the grid spacing as $h \eqdef 1/N$ and the vector of data as $\vect b \in \R^{2N+1}$ with entries $b_i \eqdef b(s_i)$. 

    Since $g$, the integral kernel in \eqref{eq:fredholmEquation}, is supported on $\{(r,s): r\ge -s,r\ge 0\}$, the bounds of integration depend on $s_i$. 
    Hence, a midpoint quadrature rule for $b(s_i)$ places $r_j$ on the midpoints, i.e., $r_j = h(j - 1/2)$ for $1\le j \le N$, $\Gb_{ij} \eqdef g(s_i,r_j)$, and $\vect p_j = p(r_j)$ gives
    \begin{equation}
      \int_{s_i}^\infty p(r)g(s_i,r)dr\approx h\sum_{j=1}^N \Gb_{ij} \vect p_j \label{eq:discretization}
    \end{equation}
    Note that due to the symmetry of the integration kernel $g$, imposing the same sampling resolution on $\vect p$ as $\vect b$ results in $\Gb$ being a $(2N+1) \times N$ matrix.  

    The differential operator $R$ in \eqref{eq:radialDifferentialOperator} is discretized using centered differencing \cite{morton2005numerical}. 
    Explicitly, for $r_{j\pm1/2} = r_j \pm h/2$, the matrix stencil for $\vect R$ is
    %\begin{equation}
    %  [\bm R \bm x]_j \eqdef \frac{1}{h^2} \Big( r_{j+1/2}(x_{j+1} - x_j) - r_{j-1/2}(x_{j} - x_{j-1})\Big).
    %  \label{laplacian_discretization}
    %\end{equation}
    %or as a matrix stencil
    \begin{equation}
      \frac{1}{h^2}
      \left[\begin{array}{ccc}
        -(r_{j-3/2} + r_{j-1/2}) & r_{j-1/2} & 0             \\
        r_{j-1/2} & -(r_{j-1/2} + r_{j+1/2}) & r_{j+1/2}     \\
        0 & r_{j+1/2} & -(r_{j+1/2} + r_{j+3/2}) \\
      \end{array}\right]
      \left[\begin{array}{c}
        p_{j-1} \\
        p_{j}   \\
        p_{j+1} \\
      \end{array}\right].
      \label{laplacian_discretization_stencil}
    \end{equation}

    The left boundary condition given in \eqref{eq:leftContinuousBoundary} and radial symmetry implies that the discretization of $\bm R$ has a reflective left boundary condition, hence
    \begin{equation}
      [\bm R\bm p]_1 = 2 r_{1/2} p_1.
    \end{equation}
    We assume that the imaging field of view is sufficiently large so that the right boundary condition in \eqref{eq:rightContinuousBoundary} is satisfied to numerical precision; i.e., 
    \begin{equation}
      [\bm R\bm p]_M = r_{N+1/2} p_N. 
    \end{equation}
    Finally, the discrete precision matrix for the prior is given by $\bm L \eqdef \bm r^{-1} \odot \bm R^2$ (since $n=2$), the coordinate-wise multiplication of reciprocals of the grid points $r_j$ composed with $\bm R^2$.

  \subsection{Bayesian inference with hierarchical modeling} 
    Our approach is to form a probabilistic model to estimate the unknown discrete representation of the PSF as well as the parameters involved in defining each of the distributions.
    That is, in addition to modeling uncertainty in $\vect p$ with a random field, it has become common to develop hierarchical models to let the data inform the level of regularization \citep{higdon2006primer,bardsley2012mcmc,howard2016bayesian,howard2014samplilng,calvetti2007introduction,kaipo2005}. 
    This analyisis will be conducted on the the discrete model 
    \begin{align}
    \bb=\Gb\vect p+\epsb, \label{eq:discreteInverseProblem}
    \end{align}
    where $\epsb$ models the discrete measurement error.
    %Bayesian posterior density estimation and its overlap with Tikhonov regularization has been the subject of much recent research; see, e.g., the books \citep{calvetti2007introduction,kaipo2005,stuart2010}. 
    %Our approach is to form a hierarchical probabilistic model on the discretization of the data and operators such that the measurement noise precision (reciprocal variance) $\lambda$ and the innovation parameter $\delta$ are random quantities to be estimated.

    Assuming that the discrete measurement error is independent Gaussian noise $\vect \eps \sim \N(\vect 0,\lambda^{-1}\vect I)$, the likelihood is a probability density satisfying
    \begin{equation}
    \pi(\bb|\vect p,\lambda)\propto \lambda^{M/2}\exp\left(-\frac{\lambda}{2}\Vert\Gb\vect p-\bb\Vert^2\right).  \label{eq:likelihood}
    \end{equation}
    When the quantity of interest is assumed to have a discrete Gauss-Markov random field prior with precision $\delta \vect L$, then $\vect p\sim \N(\zerob,(\delta\Lb)^{-1})$, in which case the prior density satisfies
    \begin{equation}
    \pi(\vect p|\delta)\propto \delta^{N/2}\exp\left(-\frac{\delta}{2}\vect p^T\Lb\vect p\right), \label{eq:prior}
    \end{equation}
    where the inverse covariance $\delta\Lb$ is the previously derived radially symmetric squared-Laplacian scaled by $\delta$.
    Applying Bayes' theorem, the probability density function for $\vect p|\bb,\lambda,\delta$ is given 
    \begin{align}
    \pi(\vect p|\bb,\lambda,\delta)
      &\propto \pi(\bb|\vect p,\lambda)\pi(\vect p|\delta) \nonumber\\
      &= \lambda^{M/2}\delta^{N/2}\exp\left(-\frac{\lambda}{2}\Vert\Gb\vect p-\bb\Vert^2-\frac{\delta}{2}\vect p^T\Lb\vect p\right). \label{eq:prePosterior}
    \end{align}
    The {\em maximum a posteriori} (MAP) estimator is the maximizer of \eqref{eq:prePosterior}, which is also the minimizer $-\ln\,\pi(\vect p|\bb,\lambda,\delta)$.
    By expanding the inner products and centering the quadratic form in terms of $\vect p$, the density in \eqref{eq:prePosterior} can be shown to be the Gaussian
    \begin{equation}
      \vect p|\lambda,\delta,\bb
        \sim \N\left((\lambda\Gb^T\Gb+\delta\Lb)^{-1}\lambda\Gb^T\bb,(\lambda\Gb^T\Gb+\delta\Lb)^{-1}\right).\label{eq:prePosteriorDensity}
    \end{equation}

    Following the Bayesian paradigm, the unknown parameters $\lambda$ and $\delta$ are also modeled as random quantities.
    A common hyper-prior model for these parameters is a Gamma distribution because of the mutual conjugacy it shares with the Gaussian, making their conditional distributions with respect to the data easy to simulate \citep{gelman2014bayesian}.
    Moreover, the flexibility of the Gamma distribution allows for a relatively unobtrusive hyper-prior probability density when little a priori information about $\lambda$ and $\delta$ is available.
    Thus,
    \begin{align}
    \pi(\lambda) &\propto \lambda^{\alpha_\lambda-1}\exp(-\beta_\lambda\lambda),\label{hyper_lambda}\\
    \pi(\delta) &\propto \delta^{\alpha_\delta-1}\exp(-\beta_\delta\delta).\label{hyper_delta}
    \end{align}
    We choose the hyper-prior parameters to be $\alpha_\lambda=\alpha_\delta=1$ and $\beta_\lambda=\beta_\delta=10^{-6}$ so that the prior distributions of $\lambda$ and $\delta$ cover a broad range of values that have been estimated for similar problems \cite{higdon2006primer,bardsley2012mcmc,howard2016bayesian}.
    Hence, the full posterior probability density function for $(\vect p,\lambda,\delta|\bb)$ is
    \begin{align}
      &\pi(\vect p,\lambda,\delta|\bb) 
        \propto \pi(\bb|\vect p,\lambda)\pi(\vect p|\delta)\pi(\lambda)\pi(\delta)\nonumber\\
        \quad&=\lambda^{M/2+\alpha_\lambda-1}\delta^{N/2+\alpha_\delta-1}\exp\left(-\frac{\lambda}{2}\Vert\Gb\vect p-\bb\Vert^2-\frac{\delta}{2}\vect p^T\Lb\vect p-\beta_\lambda\lambda-\beta_\delta\delta\right).
    %    &=\lambda^{M/2+\alpha_\lambda-1}\delta^{N/2+\alpha_\delta-1} \nonumber\\
    %    &\quad\quad\times \exp\left(-\frac{\lambda}{2}\Vert\Gb\vect p-\bb\Vert^2-\frac{\delta}{2}\vect p^T\Lb\vect p-\beta_\lambda\lambda-\beta_\delta\delta\right).
    \label{eq:posterior}
    \end{align}

\section{MCMC algorithms for posterior inference} \label{sec:mcmcAlgorithms} 
  The primary goal of this work is to draw statistical inference on the joint variable $\vect p,\lambda,\delta|\vect b$ by characterizing the joint-posterior density in \eqref{eq:posterior}.  
  Due to the hierarchical modeling of $\lambda$ and $\delta$, the density in \eqref{eq:posterior} does not have a common distributional form, so explicit characterization is not readily available.
  Monte Carlo methods that utilize the invariance of a Markov process, so called Markov Chain Monte Carlo (MCMC), have become standard because of their computational efficiency and broad applicability.
  Gibbs sampling, in particular, has found great utility \cite{higdon2006primer,bardsley2012mcmc,fowler2016stochastic,howard2016bayesian} due to its direct application to hierarchical modeling with conjugate random variables and ease of implementation with relatively little tuning.
  Moreover, investigating the Gibbs sampler and its convergence properties have been explored in \cite{agapiou2014analysis,bardsley2016metropolis,fox2015fast,van2008partially,van2015metropolis}.
  This work builds upon this literature, by providing an algorithm that is shown to empirically improve the convergence of Gibbs sampling.

%  While the idea of partial collapse is not new \citep{van2008partially}, its application must be done with care, as the resulting Markov chain may no longer be invariant with the desired posterior density.
  The application of partial collapse to a general Gibbs sampling scheme was studied in \citep{van2008partially}, and they show that partial collapse must be done with care, since the resulting Markov chain may no longer be invariant.
  The loss of invariance is case dependent, and this work provides a proof of the invariance of the Markov chain and directly addresses the potential pitfalls alluded to in \cite{van2008partially}.  

  \subsection{Gibbs sampling} 
    %This work builds upon the hierarchical sampling scheme in \cite{bardsley2012mcmc}, which we first outline for the PSF reconstruction problem.
    In a Gibb's sampling framework, the \emph{full conditionals} of each component of the posterior density are used to compute samples of the posterior density.
    We have already characterized $\pi(\vect p| \lambda, \delta)$ in \eqref{eq:prePosteriorDensity} in establishing the MAP estimator for fixed $\lambda$ and $\delta$.  
    The full conditionals for $\lambda$ and $\delta$ are computed by removing proportional terms from \eqref{eq:posterior}, and observing that the remaining conditional densities satisfy
    \begin{align}
      \pi(\lambda|\vect p,\delta,\bb)
        &\propto \lambda^{M/2+\alpha_\lambda-1}\exp\left(\left[-\frac{1}{2}\Vert\Gb\vect p-\bb\Vert^2-\beta_\lambda\right]\lambda\right),\label{eq:lambdaConditional}\\
      \pi(\delta|\vect p,\lambda,\bb)
        &\propto \delta^{N/2+\alpha_\delta-1}\exp\left(\left[-\frac{1}{2}\vect p^T\Lb\vect p-\beta_\delta\right]\delta\right).\label{eq:deltaConditional}
    \end{align}
    These are scalings and shifts of the corresponding hyper-priors, and are thus $\Gamma$-distributed.
    %These are $\Gamma$ distributions with the following parameters
    %\begin{align}
    %  \lambda|\vect p,\delta,\bb
    %    &\sim\Gamma\left(M/2+\alpha_\lambda,\frac{1}{2}\Vert\Gb\vect p-\bb\Vert^2+\beta_\lambda\right),\label{eq:lambdaFamily}\\
    %  \delta|\vect p,\lambda,\bb
    %    &\sim\Gamma\left(N/2+\alpha_\delta,\frac{1}{2}\vect p^T\Lb\vect p+\beta_\delta\right).\label{eq:deltaFamily}
    %\end{align}
    %Samples for each of the conditional densities in \eqref{eq:prePosteriorDensity},\eqref{eq:lambdaFamily}, and \eqref{eq:deltaFamily} are readily available since by the aforementioned conjugacy.  

    With each of the full conditional distributions characterized in \eqref{eq:prePosteriorDensity}, \eqref{eq:lambdaConditional}, and \eqref{eq:deltaConditional}, the requisite simulations required to establish the hierarchical Gibbs sampler are known, and steps for the Gibbs sampler are given in \Cref{alg:hierarchicalGibbs}.
    This algorithm has been used successfully for other inverse problems applications in computational imaging, and its efficacy has been demonstrated in \cite{bardsley2012mcmc,fowler2016stochastic,howard2014samplilng}.

    {
    \medskip 
    %\noindent{\bf Hierarchical Gibbs Sampler for PSF reconstruction}
    \captionof{algorithm}{Hierarchical Gibbs Sampler for PSF reconstruction} \label{alg:hierarchicalGibbs}
    \hrule
    Given $\lambda_k,\delta_k$ and $\vect p^k$, simulate
    \begin{enumerate}
    \item[1.] $\lambda_{k+1}\sim \Gamma\left(M/2+\alpha_\lambda,\frac{1}{2}\Vert\Gb\vect p^{k}-\bb\Vert^2+\beta_\lambda\right)$;
    \item[2.] $\delta_{k+1}\sim \Gamma\left(N/2+\alpha_\delta,\frac{1}{2}(\vect p^{k})^T\Lb\vect p^{k}+\beta_\delta\right)$;
    \item[3.] $\vect p^{k+1}\sim \N\left((\lambda_{k+1}\Gb^T\Gb+\delta_{k+1}\Lb)^{-1}\lambda_{k+1}\Gb^T\bb,(\lambda_{k+1}\Gb^T\Gb+\delta_{k+1}\Lb)^{-1}\right)$.
    \end{enumerate}
    \hrule
    \medskip
    } 

    In steps 1 and 2, several well-established algorithms exist for simulating draws from $\Gamma$ distributions, and are readily available in most statistical software packages \cite{marsaglia2000gamma}.
   The simulation in step 3 can be achieved by solving the system 
    \begin{equation} \label{sampleX}
      \vect p^{k+1} = \Big(\lambda_k \Gb^T \Gb + \delta_k\Lb\Big)^{-1} (\lambda_k\Gb^T \bb + \etab),\quad \etab\sim\N(\zerob,\lambda_k \Gb^T \Gb + \delta_k\Lb).
    \end{equation}

    In a generic Gibbs sampling framework, any permutation of the steps is still a proper algorithm in the sense that they all produce chains that are invariant with respect to the joint random variable. 
    However, in the hierarchical framework, there is a natural ordering that separates the hierarchical variables and the quantity of interest, which we show in the next section.
    Moreover, when partial collapse is applied, a permutation actually leads to an algorithm where invariance is lost.

    %In \Cref{sec:results}, we provide evidence on an synthetically derived problem that the Monte Carlo estimates of $\lambda$ and $\delta$ agree generally with an analogous Tikhonov regularization, where the parameter is chosen to minimize the residual of the reconstruction on the known solution.

    \subsection{Blocking and MTC Sampling}

    Observe that the conditional densities in steps 1 and 2 of \Cref{alg:hierarchicalGibbs} are conditionally independent. 
    That is, the normalizing constant in \eqref{eq:lambdaConditional} is an integral that does not depend on $\delta$ and vice versa for $\lambda$ in \eqref{eq:deltaConditional}; hence, 
    \begin{equation}
      \pi(\lambda,\delta|\vect p,\vect b) = \pi(\lambda|\vect p,\vect b) \pi(\delta|\vect p,\vect b).
    \label{eq:conditionalIndependence}
    \end{equation}
    This has the effect that the hierarchical Gibbs sampler in \Cref{alg:hierarchicalGibbs} naturally blocks $\delta$ and $\lambda$.
    Explicitly, if we denote $\vect \theta = (\lambda,\delta)$, then \Cref{alg:hierarchicalGibbs} is equivalent to \Cref{alg:twoStageGibbs}:

    {
    \medskip 
    %\noindent{\bf Two-Stage Gibbs Sampler for PSF reconstruction}
    \captionof{algorithm}{Two-stage Gibbs Sampler for PSF reconstruction} \label{alg:twoStageGibbs}
    \hrule
    Given $\vect \theta^k = (\lambda_k,\delta_k)$ and $\vect p^k$, simulate
    \begin{enumerate}
      \item[1.] $\vect \theta^{k+1} \sim \pi(\lambda,\delta|\vect p^k,\vect b)$ \quad by \eqref{eq:conditionalIndependence}
      \item[2.] $\vect p^{k+1} \sim \pi(\vect p|\lambda_{k+1},\delta_{k+1},\vect b)$ \quad by \eqref{eq:prePosteriorDensity}
    \end{enumerate}
    \hrule
    \medskip 
    }
    Note that any non-trivial permutation with step 3 of \Cref{alg:hierarchicalGibbs} makes it impossible to block $\lambda$ and $\delta$, and the methods are no longer equivalent.

    When Gibbs sampling happens in two stages, the two separate components $\vect \theta^k$ and $\vect p^k$ are themselves Markov chains whose stationary distribution is given by the corresponding marginalized density \cite[Chapter 9]{robert2013monte}.
    Moreover, the transition kernel associated with $\{\vect \theta^k\}$ is 
    \begin{equation}
      \int_{\R^n} \pi(\vect p| \lambda,\delta,\vect b) \pi(\lambda',\delta'|\vect p,\vect b) d\vect p. \label{eq:thetaTransition}
    \end{equation}
%% This is a proof of the previous assertion.
%    Observe that the density of a random variable transitioning from a distribution with $\pi(\lambda,\delta|\vect b)$ gives
%    \begin{align}
%        &\iint\left(\int_{\R^n} \pi(\vect p| \lambda,\delta,\vect b) \pi(\lambda',\delta'|\vect p,\vect b) d\vect p \right)\,\pi(\lambda,\delta|\vect b)d\lambda d\delta \nonumber\\
%      = &\iint\int_{\R^n} \pi(\vect p, \lambda,\delta|\vect b) \pi(\lambda',\delta'|\vect p,\vect b) d\vect pd\lambda d\delta \nonumber\\
%      = &\int_{\R^n} \pi(\vect p|\vect b) \pi(\lambda',\delta'|\vect p,\vect b) d\vect p \nonumber\\
%      = &\pi(\lambda',\delta'|\vect b),
%    \end{align} 
%    hence, is stationary with respect to $\pi(\lambda,\delta|\vect b)$. 
%    A similar argument gives that $\{\vect p^k\}$ is a chain whose stationary distribution is $\pi(\vect p|\vect b)$.
    This makes the analysis divide naturally into considering the quantity of interest, $\vect p$, and the hierarchical parameters, $\lambda$ and $\delta$.

    \begin{figure}[htbp!]
    \begin{center}
      \includegraphics[width=\textwidth]{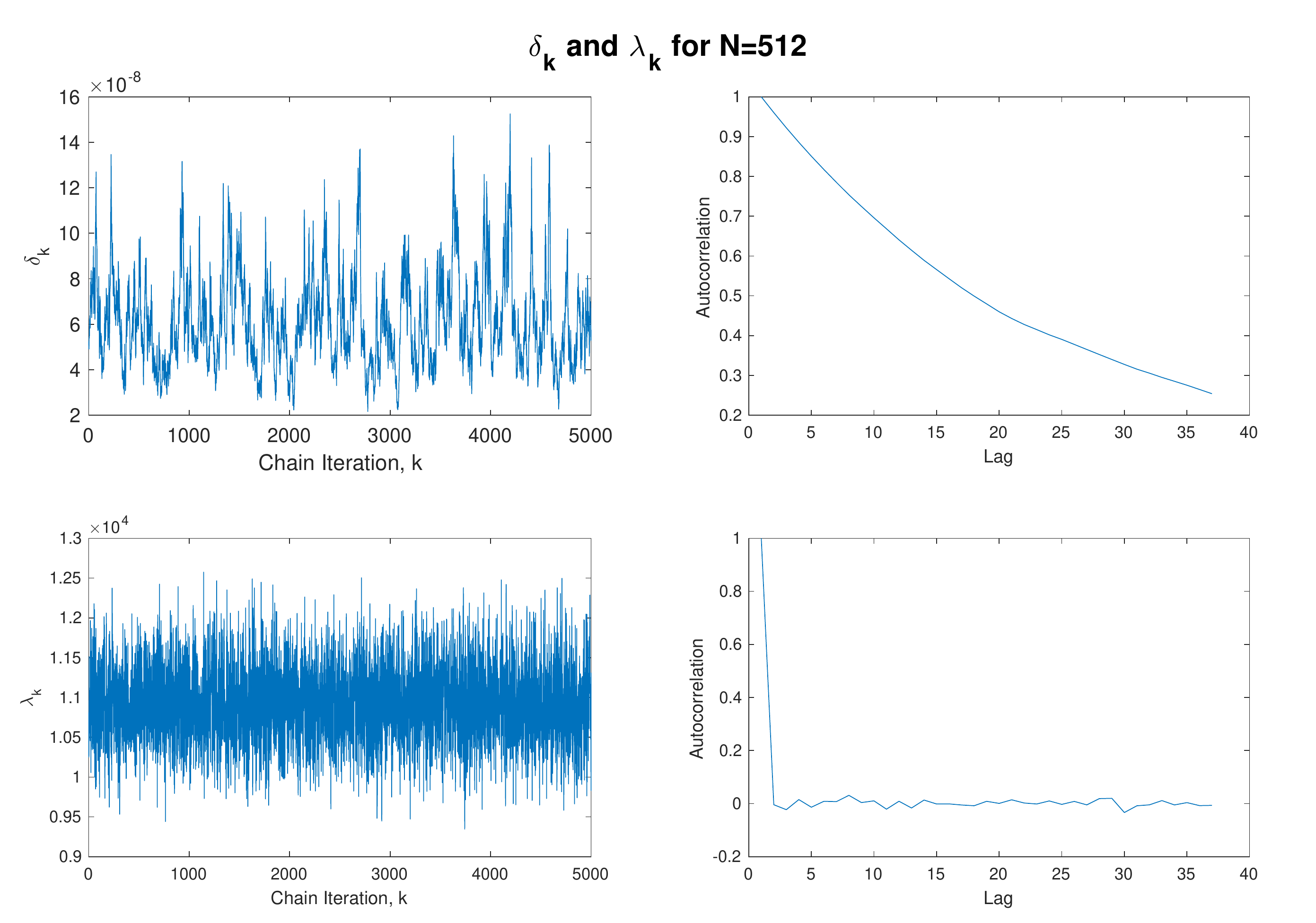}
      \caption{The $\delta_k$ and $\lambda_k$ components of the Markov chain resulting from the hierarchical Gibbs sampler discretized at $N=512$ points are plotted on the left, and the estimated autocorrelation of each sub-chain is plotted on the right.} \label{fig:hierarchicalCorrelation}
    \end{center}
    \end{figure}

    In \citep{agapiou2013aspects,agapiou2014analysis}, the Gibbs sampling approach was analyzed theoretically on a class of hierarchical models that converge to an infinite dimensional limit.
    It was shown that the Markov chain degrades as the discretization of the forward operator converges to the continuous limit. 
    In particular, when $\lambda$ is fixed, the $\{\delta_k\}$ component of the chain makes smaller and smaller expected moves as the discretization converges, effectively slowing the exploration of the $\delta$ component of the posterior and creating highly autocorrelated samples.
    %behaves like the solution to a stochastic differential equation where  
    A complementary relationship between $\lambda$ and $\vect p$ was shown, where the analogous sampler that fixes $\delta$ produces a Markov chain in $\{\lambda_k\}$ that centers immediately on the true noise precision that are less and less correlated.

    We observed similar results empirically for PSF reconstruction using Gibbs sampling; that is, as the discretization limits in \eqref{eq:discretization} increase, the $\delta_k$ component of the Markov chain moves more slowly and is more highly correlated, whereas the $\{\lambda_k\}$ component of converges rapidly and looks like nearly independent samples centered on the true noise precision.  
    See \Cref{fig:hierarchicalCorrelation}.

    The dependence between the hierarchical components and the quantity of interest, specifically $\delta_k$ and $\vect p^k$, is what drives the slowing of the $\{\delta_k\}$ chain. 
    A straight-forward approach to addressing this dependence, would be to marginalize the dependence of the blocked variable $\vect \theta$ in $\vect p$. 
    That is, sample $\lambda,\delta|\vect b$ rather than $\lambda,\delta|\vect \theta^k,\vect b$.
    It turns out that the resulting sampling algorithm still provides a Markov chain that converges to the desired posterior. 
    A sampling scheme of this form has been studied by others \cite{rue2005gaussian,acosta2014markov}, and its application to a hierarchically modeled linear inverse problem is the subject of the recent work in \cite{fox2015fast}.
    An algorithm utilizing it for PSF reconstruction is given below.

    {
    \medskip 
    %\noindent{\bf MTC Sampler}
    \captionof{algorithm}{MTC Sampler for PSF reconstruction} \label{alg:mtc}
    \hrule
    Given $\vect \theta^k = (\lambda_k,\delta_k)$ and $\vect p^k$, simulate
    \begin{enumerate}
      \item[1.] $\vect \theta^{k+1} \sim \pi(\lambda,\delta|\vect b)$
      \item[2.] $\vect p^{k+1} \sim \pi(\vect p|\lambda_{k+1},\delta_{k+1},\vect b)$
    \end{enumerate}
    \hrule
    \medskip 
    }
    
    The density $\pi(\lambda,\delta|\vect b)$ is given by computing the marginal density of $\pi(\lambda,\delta,\vect p|\vect b)$ by integrating \eqref{eq:posterior} with respect to $\vect p$.
    To see this explicitly, first write the full posterior \eqref{eq:posterior} as the Gaussian density defined in \eqref{eq:prePosteriorDensity} times terms that only involve $\lambda$ and $\delta$.
    Then, integrating with respect to $\vect p$ yields
    %center the quadratic form in $\vect p$, that results in a Gaussian kernel that integrates to 1 with respect to $\vect p$, and left over terms that are given by
    \begin{eqnarray}
    \pi(\lambda,\delta|\bb)
      &\propto& \lambda^{M/2}\pi(\lambda)\delta^{N/2} \pi(\delta)\exp\left(-\frac12a(\delta,\lambda)-\frac12 b(\delta,\lambda)\right),\label{marginalize2}
    \end{eqnarray}
    where
    \begin{eqnarray}
    a(\lambda,\delta)&=&\ln\left(\det(\lambda\Gb^T\Gb+\delta\Lb)\right), \label{marginalize3}\\
    b(\lambda,\delta)&=&\lambda(\bb^T\bb-\bb^T\Gb(\lambda\Gb^T\Gb+\delta\Lb)^{-1}\Gb^T\bb).\label{marginalize4}
    \end{eqnarray}
    The marginalization comes at a cost, however, as the density $\pi(\lambda,\delta|\vect b)$ is no longer one where an efficient algorithm is available, and rejection-based methods such Metropolis-Hastings must be used to simulate samples.
    Hence, the convergence in the chain is now driven by the efficiency of sampling the blocked variable $\vect \theta^k$.
    In \cite{fox2015fast}, they explore methods to accelerate this process in a deconvolution application, where because the forward operator is a convolution, Fourier based factorizations can be utilized which are not available in the application of PSF reconstruction.

    The key difference between this method and the partially collapsed sampler, that we present next, is that MTC removes the dependence between $\lambda_k$ and $\vect p^k$ which negates the predicted efficiency in sampling the $\lambda$ component when the dependence remains \cite{agapiou2013aspects}, which results in a loss of efficiency in sampling $\lambda$.
    Partial collapse integrates only the $\delta$ component with respect to $\vect p$, leaving the dependence between $\lambda_k$ and $\vect p^k$, whereas MTC keeps the variables blocked. 
    Our method requires Metropolis-Hastings only in one dimension, $\{\delta_k\}$, which requires less tuning and in the proposal.

  \subsection{Partially collapsing the Gibbs sampler for PSF reconstruction} \label{sec:partialCollapse} 

    Our sampling approach is similar to the MTC algorithm outlined in \Cref{alg:mtc}, only that we retain the dependence of $\lambda_k$ and $\vect p^k$, and as predicted by the theoretical work in \cite{agapiou2013aspects}, this dramatically improves autocorrelation of the $\{\lambda_k\}$ component of the Markov process.
    Moreover, because Metropolis-Hastings is now only on $\{\delta_k\}$, gaining efficiency in that component is more easily attained by tuning a one-dimensional random walk proposal.

    The algorithm falls into a category of samplers that are investigated in \citep{van2008partially,van2015metropolis}, where they show using spectral methods that partial collapse can improve chain convergence.
    Additionally, they show that care must be taken when modifying steps in the Gibbs sampler, since changes could result in a Markov chain whose stationary distribution is no longer the target posterior.
    In particular, they show that a partially collapsed Gibbs sampler may no longer have the same stationary distribution as the original Gibbs sampler.
    We will explicitly show that partial collapse maintains the posterior as the stationary distribution.

    {
    \medskip
    \captionof{algorithm}{Partially Collapsed Gibbs for PSF reconstruction} \label{alg:pcGibbs}
    \hrule
    Given $\lambda_k,\delta_k$ and $\vect p^k$, simulate
    \begin{enumerate}
      \item $\lambda^{k+1} \sim \pi(\lambda|\vect p^k,\vect b)$
      \item $\delta^{k+1} \sim \pi(\delta|\vect \lambda^{k+1},\vect b)$
      \item $\vect p^k \sim \pi(\vect p| \lambda^{k+1},\delta^{k+1},\vect b)$
    \end{enumerate}
    \hrule
    \medskip
    } 
    
    First note that the coupling between components in \Cref{alg:pcGibbs} is more complicated than \Cref{alg:twoStageGibbs} and \Cref{alg:mtc}, and that the algorithm is stationary with respect to $\pi(\vect p,\lambda,\delta|\vect b)$ is not obvious.
    \begin{thm}
      \Cref{alg:pcGibbs} is stationary with respect to $\pi(\vect p,\lambda,\delta|\vect b)$.
    \end{thm}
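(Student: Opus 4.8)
The plan is to establish stationarity directly, by showing that if the incoming state $(\vect p^k,\lambda_k,\delta_k)$ is distributed according to the target $\pi(\vect p,\lambda,\delta\mid\vect b)$, then the outgoing state $(\vect p^{k+1},\lambda^{k+1},\delta^{k+1})$ produced by the three draws of \Cref{alg:pcGibbs} has the same law. Equivalently, one writes the one-step transition kernel as the product $\pi(\lambda^{k+1}\mid\vect p^k,\vect b)\,\pi(\delta^{k+1}\mid\lambda^{k+1},\vect b)\,\pi(\vect p^{k+1}\mid\lambda^{k+1},\delta^{k+1},\vect b)$ and integrates the target against it, checking that the target is reproduced. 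I would organize the argument by tracking how the joint law of the retained variables is assembled as each conditional is applied, since this makes the effect of the ordering and of the collapse completely transparent.

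First I would marginalize the target over $\lambda$ and $\delta$ to note that the incoming $\vect p^k$ is distributed as $\pi(\vect p\mid\vect b)$, while the stale values $\lambda_k,\delta_k$ are overwritten and never fed forward. Step 1 draws $\lambda^{k+1}\sim\pi(\lambda\mid\vect p^k,\vect b)$, so by the definition of a conditional density the pair $(\vect p^k,\lambda^{k+1})$ has joint law $\pi(\vect p,\lambda\mid\vect b)$; in particular $\lambda^{k+1}$ has already collapsed to the marginal $\pi(\lambda\mid\vect b)$. Since Step 2 draws $\delta^{k+1}\sim\pi(\delta\mid\lambda^{k+1},\vect b)$ using only $\lambda^{k+1}$, the pair $(\lambda^{k+1},\delta^{k+1})$ acquires the law $\pi(\lambda\mid\vect b)\,\pi(\delta\mid\lambda,\vect b)=\pi(\lambda,\delta\mid\vect b)$. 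Finally Step 3 draws $\vect p^{k+1}\sim\pi(\vect p\mid\lambda^{k+1},\delta^{k+1},\vect b)$, so the full triple has density $\pi(\lambda,\delta\mid\vect b)\,\pi(\vect p\mid\lambda,\delta,\vect b)=\pi(\vect p,\lambda,\delta\mid\vect b)$ by the chain rule, which is exactly the target.

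The crux, and the place where the pitfall flagged in \cite{van2008partially} would appear, is the passage from Step 1 to Step 2, where the partial collapse takes place. The collapsed draw of $\delta^{k+1}$ marginalizes $\vect p$, and this is consistent only because the $\lambda$-marginal produced by Step 1 is genuinely $\pi(\lambda\mid\vect b)$, not a conditional still entangled with the stale $\vect p^k$. The point I would verify most carefully is therefore that $\vect p^k$ may be discarded without bias after Step 1: it is not used in Step 2 and is regenerated in Step 3, while the marginal of $\lambda^{k+1}$ is already correct. I would also record that the conditional independence $\pi(\lambda,\delta\mid\vect p,\vect b)=\pi(\lambda\mid\vect p,\vect b)\,\pi(\delta\mid\vect p,\vect b)$ from \eqref{eq:conditionalIndependence} is what lets Step 1 drop $\delta$, so that it coincides with a genuine Gibbs move; this identification pins down the admissible ordering, and a permutation would disrupt the sequential build-up of the joint law on which the proof rests.
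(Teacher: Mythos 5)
Your proposal is correct and takes essentially the same route as the paper: both verify one-step invariance by integrating the target against the transition kernel $\pi_\bb(\vect p'|\lambda',\delta')\,\pi_\bb(\delta'|\lambda')\,\pi_\bb(\lambda'|\vect p)$ and reassembling the target via the chain rule, with your sequential ``law-tracking'' of $(\vect p^k,\lambda^{k+1})$, then $(\lambda^{k+1},\delta^{k+1})$, then the full triple being just a rephrasing of the paper's explicit marginalization steps. Your observations that the conditional independence \eqref{eq:conditionalIndependence} identifies Step 1 with a genuine Gibbs move and that permutations of the steps break the argument likewise match the paper's remarks.
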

    \begin{proof}
      Denote $\pi_\bb(\cdot) = \pi(\cdot|\vect b)$ for a conditional density depending on the data $\vect b$. 
      The transition kernel associated with this algorithm is
      \begin{equation}
        %K(\lambda,\delta,\vect p; \lambda',\delta',\vect p') = \pi(\vect p'|\lambda', \delta',\bb) \pi(\delta'|\lambda',\bb) \pi(\lambda'|\vect p,\bb).
        K(\lambda,\delta,\vect p; \lambda',\delta',\vect p') = \pi_\bb(\vect p'|\lambda', \delta') \pi_\bb(\delta'|\lambda') \pi_\bb(\lambda'|\delta,\vect p).
      \end{equation}
      Using \eqref{eq:conditionalIndependence} to substitute $\pi_\bb(\lambda|\delta,\vect p) = \pi_\bb(\lambda|\vect p)$, the action of transition on the density $\pi_\bb(\lambda, \delta, \vect p)$ is
      \begin{align}
        \int\limits_{\R^N}\int\limits_\R\int\limits_\R & K(\lambda,\delta,\vect p; \lambda',\delta',\vect p') \pi_\bb(\lambda,\delta,\vect p)\,d\lambda d\delta d\vect p \nonumber\\
        %\int\limits_{\R^N}\int\limits_\R\int\limits_\R K \cdot \pi_\bb\,d\lambda d\delta d\vect p
          &= \pi_\bb(\vect p'|\lambda', \delta')\pi_\bb(\delta'|\lambda') \int\limits_{\R^N}\int\limits_{\R} \pi_\bb(\lambda'|\delta,\vect p) \int\limits_\R \pi_\bb(\lambda,\delta,\vect p)\,d\lambda d\delta d\vect p\nonumber \\
          &= \pi_\bb(\vect p'|\lambda', \delta')\pi_\bb(\delta'|\lambda') \int\limits_{\R^N}\int\limits_{\R} \frac{\pi_\bb(\lambda',\delta,\vect p)}{\pi_\bb(\delta,\vect p)} \pi_\bb(\delta,\vect p) d\delta d\vect p \nonumber\\
          &= \pi_\bb(\vect p'|\lambda', \delta')\pi_\bb(\delta'|\lambda') \pi_\bb(\lambda') \nonumber\\
          &= \pi_\bb(\vect p',\lambda',\delta').
       \end{align} 
    \end{proof}

    %% Intuitive argument for stationarity 
    %To provide intuition as to why partial collapse retains retains stationarity, consider replacing step two in the Gibbs sampler with
    %\begin{enumerate}
    %\item[3$^\ast$.] Compute $(\delta_{k+1},\hat{\vect p}^{k+1})\sim \pi(\delta,\vect p|\bb,\lambda_{k+1})$.
    %\end{enumerate}
    %The modified algorithm is now a Markov chain in four components $(\lambda_k,\delta_k,\vect p^k,\hat{\vect p}^k)$, for which $(\lambda_k,\delta_k,\vect p^k)$ is sampled as in Gibbs sampling, but conditioned on the auxiliary component $\hat{\vect p}^k$.
    %Since $\hat{\vect p}^{k+1}$ does not appear in any other steps, it can be \emph{marginalized} (in the language of \cite{agapiou2014analysis}) or \emph{trimmed} (in the language of \cite{van2008partially}), by integrating the transition kernel associated with the algorithm with respect to $\hat{\vect p}^{k+1}$.
    %This results in replacing step 2$^\ast$ with
    %\begin{enumerate}
    %\item[2$^\dagger$.] Compute $\delta_{k+1}\sim \pi(\delta|\bb,\lambda_{k+1})$.
    %\end{enumerate}
    %As is discussed in \citep[Section 3.3]{van2008partially}, the trimming step must be done carefully, else it will change the stationary distribution of the resulting Markov chain. 
    %In our case, we are able to trim $\hat{\vect p}^{k+1}$ in step 2, because $\vect p^{k+1}$ is sampled in the very next step.  
    %This would not be the case if steps 2 and 3 were interchanged, hence the steps of partially collapsed Gibbs samplers cannot be permuted without, potentially, changing the stationary distribution of the chain.  

    The critical thing to note in this simple computation is that any permutation of the steps of \Cref{alg:pcGibbs} will result in a sampler that is no longer invariant with respect to $\pi(\vect p,\lambda,\delta|\vect b)$.
    This is in contrast to the Gibbs sampler, in which the steps can be permuted in any order without changing the stationary distribution of the Markov chain.

    Both \Cref{alg:mtc} and \Cref{alg:pcGibbs} have been stated in terms of sampling exactly the densities $\pi(\lambda,\delta|\vect b)$ and $\pi(\delta|\lambda,\vect b)$, where we have mentioned that these are not readily available, and we employ Metropolis-Hastings to sample them.
    %% Only \delta integral
    %\begin{eqnarray}
    %\pi(\delta|\bb,\lambda)
    %&\propto& \int_{\R^N}\pi(\vect p,\delta,\lambda|\bb)\,d\vect p \nonumber\\
    %&\propto& \delta^{N/2} \pi(\delta)\int_{\R^N}\exp\left(-\frac{\lambda}{2}\Vert\Gb\vect p-\bb\Vert^2-\frac{\delta}{2}\vect p^T\Lb\vect p\right)\,d\vect p\nonumber\\%\label{marginalize1}\\
    %&\propto& \delta^{N/2} \pi(\delta)\exp\left(-\frac12a(\delta,\lambda)-\frac12 b(\delta,\lambda)\right),\label{marginalize2}
    %\end{eqnarray}
    %where
    %\begin{eqnarray}
    %a(\delta,\lambda)&=&\ln\left(\det(\lambda\Gb^T\Gb+\delta\Lb)\right)\text{ and }\label{marginalize3}\\
    %b(\delta,\lambda)&=&\lambda(\bb^T\bb-\bb^T\Gb(\lambda\Gb^T\Gb+\delta\Lb)^{-1}\Gb^T\bb).\label{marginalize4}
    %\end{eqnarray}
    The density $\vect \pi(\delta|\lambda, \vect b)$ is obtained by removing the $\lambda$-proportional terms from \eqref{marginalize2}.  

    A Metropolis-Hastings method for computing both densities requires repeated evaluations of \eqref{marginalize2}, and the computational cost is dominated by finding the determinant in \eqref{marginalize3} and computing the matrix solve in \eqref{marginalize4}. 
    For the scales relevant to PSF reconstruction, these can be accomplished by a Cholesky factorization.
    That is, we utilize the computation of an upper-triangular Cholesky factor $\vect R_{\lambda,\delta}$ which satisfies 
    \begin{equation}
      \vect R_{\lambda,\delta}^T\vect R_{\lambda,\delta} = \lambda\Gb^T\Gb + \delta\Lb, \label{eq:cholesky}
    \end{equation}
    and can be used to compute the quantities in \eqref{marginalize3} and \eqref{marginalize4} by
    \begin{align} 
      a(\delta,\lambda) &= \bb^T( \bb - \Gb \vect R_{\lambda,\delta}^{-1} \vect R_{\lambda,\delta}^{-T} \Gb^T \lambda \bb) \label{eq:term1}\\
      b(\delta,\lambda) &= 2\sum_{i=1}^N \ln|r_{i,i}(\lambda,\delta)|. \label{eq:term2}
    \end{align}
    The matrix solves in \eqref{eq:term1} are accomplished via backwards- and forwards-substitution $(O(n^2))$, and $r_{i,i}(\lambda,\delta)$ are diagonal elements of $\vect R_{\lambda,\delta}$.
    Hence, the computational cost of evaluating $\pi(\delta|\vect b,\lambda)$ is dominated by the Cholesky algorithm ($O(n^3)$), and in order to emphasize this dependence, we denote
    \begin{equation}
      c(\vect R_{\lambda,\delta}) \eqdef a(\delta,\lambda) + b(\delta,\lambda).
    \end{equation}

    A random walk in log-normal space is used for the proposal of the Metropolis-Hastings step in both the MTC and PC Gibbs algorithms.
    A full statement of each algorithm using this notation is in \Cref{alg:mtcMetropolisHastings} and \Cref{alg:pcGibbsMetropolisHastings}.

      \medskip
      \captionof{algorithm}{Metropolis-Hastings MTC Sampler for PSF Reconstruction} \label{alg:mtcMetropolisHastings}
      \hrule
      Given $\lambda_k,\delta_k,\vect p^k$, and a random walk covariance $\vect C$, simulate
      \begin{enumerate}
      \item Set $\lambda = \lambda_k$, $\delta = \delta_k$ and compute $\vect R_{\lambda,\delta}$. \\ 
        \indent For $j = 1\dots n_{mh}$
          \begin{enumerate}
            \setlength{\itemindent}{2em}
            \item[(i)] Simulate $\vect w \sim \N(0,\vect I_{2\times2})$ and set $\displaystyle \begin{bmatrix}\lambda'\\\delta'\end{bmatrix} = \exp\left(\vect C^{1/2}\vect w + \begin{bmatrix}\ln\lambda\\\ln\delta\end{bmatrix}\right)$. 
            \item[(ii)] Compute $\vect R_{\lambda',\delta'}$. 
            \item[(iii)]Simulate $u\sim U(0,1)$. If 
            $$\log u < \min\left\{ 0, c(\vect R_{\lambda',\delta'}) - c(\vect R_{\lambda,\delta}) \right\},$$ 
            then set $\lambda = \lambda ',\delta = \delta'$ and $\vect R_{\lambda,\delta} = \vect R_{\lambda',\delta'}$.
          \end{enumerate}
        \indent \indent Set $\lambda_{k+1} = \lambda$ and $\delta_{k+1}=\delta$.
%      \item[3.] Simulate $\vect p^{k+1}\sim \N\left((\lambda_{k+1}\Gb^T\Gb+\delta_{k+1}\Lb)^{-1}\lambda_{k+1}\Gb^T\bb,(\lambda_{k+1}\Gb^T\Gb+\delta_{k+1}\Lb)^{-1}\right)$.
      \item Simulate $\vect p^{k+1}\sim \N\left(\lambda_{k+1}{\vect R^{-T}_{\lambda_{k+1},\delta_{k+1}}} \vect R_{\lambda_{k+1},\delta_{k+1}}^{-1}\Gb^T\bb,({\vect R^T_{\lambda_{k+1},\delta_{k+1}}} \vect R_{\lambda_{k+1},\delta_{k+1}})^{-1}\right)$.
      \end{enumerate}
      \hrule 
      \medskip

    Observe that in \Cref{alg:mtcMetropolisHastings}, the Cholesky computation used for the last accepted proposal can be re-used in order to simulate $\vect p^{k+1}$.  
    In fact, because $\vect \theta^k$ does not depend on $\vect p^{k+1}$, the two components can be computed in serial; i.e.~$\vect \theta^k$ can be computed (and potentially thinned) and then samples of $\vect p^{k+1}$ can be computed, as suggested in \cite{fox2015fast}.
    This is in contrast to \Cref{alg:pcGibbsMetropolisHastings}, where an additional Cholesky factor must be computed in order to sample $\lambda_k$.
    We show in \Cref{sec:results} that the added efficiency is worth this additional computation.

    \vbox{
    \medskip 
    %\noindent{\bf The Partially Collapsed Gibbs Sampler for PSF Reconstruction}
    \captionof{algorithm}{Metropolis-Hastings PC Gibbs Sampler for PSF Reconstruction} \label{alg:pcGibbsMetropolisHastings}
    \hrule
    Given $\lambda_k,\delta_k,\vect p^k$, and $\sigma^2$, simulate
    \begin{enumerate}
    \item Simulate $\lambda_{k+1}\sim \Gamma\left(M/2+\alpha_\lambda,\frac{1}{2}\Vert\Gb\vect p^k-\bb\Vert^2+\beta_\lambda\right)$.
    \item Set $\delta = \delta_k$ and compute $\vect R_{\lambda_{k+1},\delta}$. \\ 
      \indent For $j = 1\dots n_{mh}$
        \begin{enumerate}
          \setlength{\itemindent}{2em}
          \item[(i)] Simulate $w \sim \N(0,1)$ and set $\delta' = \exp(\sigma w + \ln(\delta))$. 
          \item[(ii)] Compute $\vect R_{\lambda_{k+1},\delta'}$. 
          \item[(iii)]Simulate $u\sim U(0,1)$. If 
          $$\log u < \min\left\{ 0, c(\vect R_{\lambda_{k+1},\delta'}) - c(\vect R_{\lambda_{k+1},\delta}) \right\},$$ 
          then set $\delta = \delta'$ and $\vect R_{\lambda_{k+1},\delta} = \vect R_{\lambda_{k+1},\delta'}$.
        \end{enumerate}
      \indent \indent Set $\delta_{k+1}=\delta$.
%      \item[3.] Simulate $\vect p^{k+1}\sim \N\left((\lambda_{k+1}\Gb^T\Gb+\delta_{k+1}\Lb)^{-1}\lambda_{k+1}\Gb^T\bb,(\lambda_{k+1}\Gb^T\Gb+\delta_{k+1}\Lb)^{-1}\right)$.
    \item Simulate $\vect p^{k+1}\sim \N\left(\lambda_{k+1}{\vect R^{-T}_{\lambda_{k+1},\delta_{k+1}}} \vect R_{\lambda_{k+1},\delta_{k+1}}^{-1}\Gb^T\bb,({\vect R^T_{\lambda_{k+1},\delta_{k+1}}} \vect R_{\lambda_{k+1},\delta_{k+1}})^{-1}\right)$.
    \end{enumerate}
    \hrule 
    \medskip 
    }

    The theoretical justification for the use of Metropolis-Hastings as a sub-step in samplers can be found in  \citep{robert2013monte,gamerman2006markov,kaipo2005}.
    Note that each proposal step requires a computationally expensive Cholesky solve.
    The authors in \citep[Chapter 10.3]{robert2013monte} suggest that for Metropolis-within-Gibbs, additional proposals ($n_{\rm MN}>1$) may not be worth the computational cost, while others have suggested more sub-steps \citep[Section 6.4.2]{gamerman2006markov,van2015metropolis} to improve convergence.
    The situation likely depends on the problem, and due to the lack of objective criteria, we investigate empirical evidence that suggests that in the case of PSF reconstruction, more than one step can improve convergence.

\section{Results} \label{sec:results}

  In this section, each of the three methods described are used to analyze synthetically generated and real data from a diagnostic radiographic imaging systems in operation at the Nevada National Security Site.
  We first establish the metrics by which we compare them in order to fairly compare the algorithms.
  In particular, we briefly describe a statistical method for determining whether the chain has reached stationarity and passed the so-called burn-in stage and how efficiently the chains explore the invariant density by estimating the stationary autocorrelation.
  Our measure of efficiency also takes into account computational effort, and we show that PC Gibbs for hierarchical sampling performs significantly better than standard Gibbs sampling and at least as well as the recently developed MTC sampler.
  %Statistical methods that quantitatively evaluate each of these aspects are briefly outlined in \Cref{subsec:convergeceStatistics}.  
  %Our  comparison of efficiency uses these measures of convergence, but also take into account computational effort.

  \subsection{Statistical measures of convergence.} \label{subsec:convergeceStatistics}
    The stationarity of the partially collapsed Gibbs sampler guarantees that Monte Carlo realizations of the Markov process converge in distribution to realizations from $\pi(\vect p, \lambda,\delta|\bb)$, but this asymptotic result does not address the practical fact that only a finite number of simulations can be computed. 
    Two aspects of convergence are addressed in this section.
    First, the initial samples must converge, or \emph{burn-in}, to the desired stationary distribution of the Markov chain, and second, since the process produces identically distributed but \emph{dependent} samples, how effectively uncorrelated the samples are determines how well they characterize the stationary distribution.
    In this section, we give a brief overview of two statistical estimators that address these two aspects.
    Both estimators inform how long to run the MCMC algorithm to effectively analyze the chain as a robust sample from $\pi(\vect p,\lambda,\delta|\bb)$.

    The first issue is concerned with how close the Markov chain is to the target invariant density.
    In practice, the Markov chain is initialized with simulations that are not from the target density, $\pi(\vect p, \lambda,\delta|\bb)$, and \citep{geweke1991evaluating} provides statistically motivated approach that uses a statistical test to evaluate the test hypothesis that the joint mean value of an early section of the Markov chain is equal to that of a latter portion.
  Formally, for a given univariate component of a stochastic process, $\{X^1,\dots, X^N\}$, let $N_m$ denote the $m$th percentile of $N$, $\mu_m$ to be the mean of $\{X^1,\dots, X^{N_m}\}$ and $\mu_{m'}$ the mean of $\{X^{N_{m'}+1},\dots, X^N\}$.
  Following \citep{geweke1991evaluating}, we choose the 10\textsuperscript{th} and 50\textsuperscript{th} percentiles to establish the estimators for $\mu_{10}$ and $\mu_{50'}$, which  are
  \begin{equation}
    \bar X_{10}=\frac{1}{N_{10}}\sum_{k=1}^{N_{10}}X^k,\quad{\rm and}\quad\bar X_{50'}=\frac{1}{N-N_{50'}}\sum_{k=N_{50}+1}^N X^k.
  \end{equation}
  For the test $H_0:\mu_{10} = \mu_{50'}$, \citep{geweke1991evaluating} shows the corresponding convergence diagnostic test statistic satisfies
  \begin{equation} 
    R_{\rm Geweke}\eqdef\frac{\bar X_{10}-\bar X_{50'}}{\sqrt{\hat S_{10}(0)/N_{10}+\hat S_{50'}(0)/N_{50}}}\stackrel{d}{\longrightarrow}\N(0,1),\quad{\rm as}\quad N\rightarrow\infty, \label{eq:geweke}
  \end{equation}
  where $\hat S_{10}(0)$ and $\hat S_{50'}(0)$ denote consistent spectral density estimates for the variances of $\{X^1,\dots, X^{N_{10}}\}$ and $\{X^{N_{51}},\dots, X^N\}$, respectively. 
  These can be estimated via a periodogram estimator, and in our results, we use a Danielle window of width $2\pi/(0.3p^{1/2})$ as recommended by \citep{geweke1991evaluating}.
  The test provides a method for evaluating a portion of the realizations of the Markov chain that are suitable for a rigorous analysis.
  For the results in this paper, each algorithm was run for a fixed number of iterations, and the last half of the simulations were tested. 
  The process is then assumed to be in stationarity if the test provides no statistical evidence for a difference in the quantile means.

  The second estimator we establish measures how efficiently the stationary Markov chain characterizes the posterior density.
  That is, after identifying the burn-in portion of the chain, successive simulations may be highly correlated and result in an excessively slow exploration of the target density. 
  Improving this aspect of convergence is the primary motivation for partial collapse.
  Following \citep{sokal1997monte}, we use the notion of integrated autocorrelation time to quantify how much the Monte Carlo samples have explored the target density relative to a hypothetical independent sample.
  Summarizing that work, suppose that $\{X_1,X_2,\dots\}$ is an identically distributed correlated stochastic process with individual variance $\sigma^2$, then the Monte Carlo error for the estimator $\bar X_N=\frac1N\sum_{k=1}^NX^i$ can be divided into a contribution from inherent variance in $X_j$, and covariance between $X_i$ and $X_j$ for $j\not=i$; i.e.
  \begin{align}
  {\rm Var}(\bar X_N)
    &=\frac{\sigma^2}{N}\left(1+2\sum_{k=1}^{N-1}\left(1-\frac{k}{N}\right)\frac{{\rm Cov}(X^1,X^{1+k})}{\sigma^2}\right).
    \label{eq:monteCarloError}
  \end{align}
  The autocorrelation function at lag $k$ of the process is $\rho(k) \eqdef \frac{\mathrm{Cov}(X^1, X^{|k|})}{\sigma^2}$, and so for large $N$, the Monte Carlo error can be approximated with
  \begin{equation} 
    {\rm Var}(\bar X_N) \approx \frac{\sigma^2}{N}\sum_{k=-\infty}^\infty \rho(k) \eqdef \frac{\sigma^2}N \tau_{\rm int}.
    \label{eq:monteCarloErrorEstimator}
  \end{equation}
  The approximation is based on the assumption that the autocorrelation lag of the process dies off fast enough so that $k/N$ does not contribute to \eqref{eq:monteCarloError}.

  Since $\sigma^2/N$ would be the variance of the Monte Carlo estimator had $\{X_1,\dots,X_N\}$ been uncorrelated, we think conceptually of the parameter $\tau_{int}$ as the equivalent number of Markov chain simulations required to obtain an effectively independent sample from the target density (in terms of Monte Carlo error of sample mean estimation).
  This analogy motivates what is sometimes called the essential sample size of the chain 
  \begin{equation}
    N_{\rm ESS} \eqdef N/\tau_{\rm int}. \label{eq:ess}
  \end{equation}
  To estimate these parameters, \citep{sokal1997monte} gives the following unbiased estimator for the normalized autocorrelation function,
  \begin{equation}
    \hat\tau_{\rm int}=\sum_{k=-\bar N}^{\bar N} \hat{\rho}(k),
  \end{equation}
  where $\bar N< N-1$ is some window length, and $\hat \rho(k)$ is the empirical normalized covariance estimator over that interval.
  That is,
  \begin{align}
  \hat\rho(k)&\eqdef \hat C(k)/\hat C(0), \quad\text{where}\quad
  \hat C(k)=\frac{1}{N-k}\sum_{i=1}^{N-k} (X_i-\bar{X_N})(X_{i+k}-\bar{X_k}).
  \end{align}
  The choice we use suggested by \cite{sokal1997monte} for the window size is the smallest integer such that $\bar N\geq 3 \hat\tau_{\rm int}$.  
  Finally, our estimate the essential sample size, denoted ESS, is given by substituting the estimator $\hat\tau_{\rm int}$ for $\tau_{\rm int}$ in \eqref{eq:ess}.
  
  The ESS estimate can be used in a couple of ways. 
  A standard approach, when samples are relatively cheap to compute, is to do as follows: (1) compute a very long MCMC chain (we choose $10^4$ below); (2) remove the first half of the chain as burn-in and verify using Geweke's test that the second half of the chain is in equilibrium; and (3) estimate the number of effectively independent samples in the second half of the chain using $\hat{K}_{\rm ESS}$. 
  In cases in which each sample is expensive to compute, however, there is incentive to make the chain as short as possible. 
  In such instances, both chain convergence and autocorrelation can be monitored online, so that a minimal number of samples are discarded in the burn-in stage, and also so that $\hat{K}_{\rm ESS}$ is not larger than it needs to be in order to perform the desired uncertainty analysis.
  
  The ESS is not the complete answer to the efficiency of the algorithm.
  Highly uncorrelated chains for $\delta$ can be achieved in the MTC and PC Gibbs algorithm by increasing the number of inner Metropolis-Hastings steps $n_{\rm MH}$, however, the addition of each step increases the number of expensive matrix factorizations by a factor of the chain length.
  To take the extra computational effort into account, we use the number of Cholesky factorizations as a metric for computational effort since this computation dominates the computational time per MCMC iteration.
  That is, we use the number of Cholesky solves divided by the ESS, which we interpret as the computational effort to obtain an equivalent uncorrelated sample.
  If we assume that a chain thinned according to estimated integrated autocorrelation is equivalent to an uncorrelated sample, this measure says how computationally costly it is to obtain each sample.

  \subsection{Synthetic examples of PSF reconstruction}\label{subsec:synthethic}

  We first establish the efficacy of our approach on a simulated example where the true profile is explicitly known, and the data is artificially corrupted with simulated noise.
  To simulate synthetic data, we reconstruct the radial profile of a two-dimensional Gaussian kernel
  \begin{equation} \label{eq:syntheticx}
    x(r) = (2\pi\sigma^2)^{-1} e^{\frac{-r^2}{2\sigma^2}},
  \end{equation}
  where $\sigma = \frac1{15}$ is chosen so that the effective width of the kernel is about 20\% of the image width when scaled to $[-1,1]$.
  Observe that in the case of a two-dimensional Gaussian, the action of the forward operator in (\ref{eq:psfForwardModel}) is the scaled error function
  \begin{equation} \label{eq:syntheticb}
    b(s) = \frac 1{\sqrt{2\pi}\sigma} \int_{-\infty}^s e^{-\frac{s'^2}{2\sigma^2}}\,ds',
  \end{equation}
  which can be numerically calculated with very high precision.
  Gaussian measurement error with noise strength that is 2\% of the strength of the signal is synthetically generated and added to $b(s)$.

  All three MCMC algorithms were run with a chain length of $N=10^4$, and the last $5\times10^3$ simulations were tested with the Geweke statistic for stationarity.
  For each algorithm, the resulting $p$-values were all greater than $0.9$, hence we use the last $5\times 10^3$ simulations as burned-in MCMC samples from the posterior density.

  To estimate the PSF and the hierarchical parameters, we use the sample mean of the burned-in samples.
  The true PSF falls well within the distribution of MCMC samples, and the mean MCMC estimator for the PSF matches the truth quite well; see the left panel of \Cref{fig:syntheticPsfRecon}.
  Note that the most uncertain region of the reconstruction are the initial discretization points corresponding to the height of the PSF. 
  Since the simulated data has a known solution, if we interpret the problem variationally with the Tikhonov regularization parameter $\delta/\lambda$, we can characterize a ``best'' regularization by minimizing the $L^2$ norm of the residual with respect to the Tikhonov regularization parameter.
  In right panel of \Cref{fig:syntheticPsfRecon}, a plot of the log $L^2$ norm of the residuals versus the Tikhonov regularization parameter are given with the MCMC estimate $\hat \delta/\hat \lambda$ indicated by an asterisk. 
  Note that the MCMC estimator nearly falls on the minimum of the curve.
  \begin{figure}[htbp!]
    \begin{center}
      \includegraphics[width=.45\textwidth]{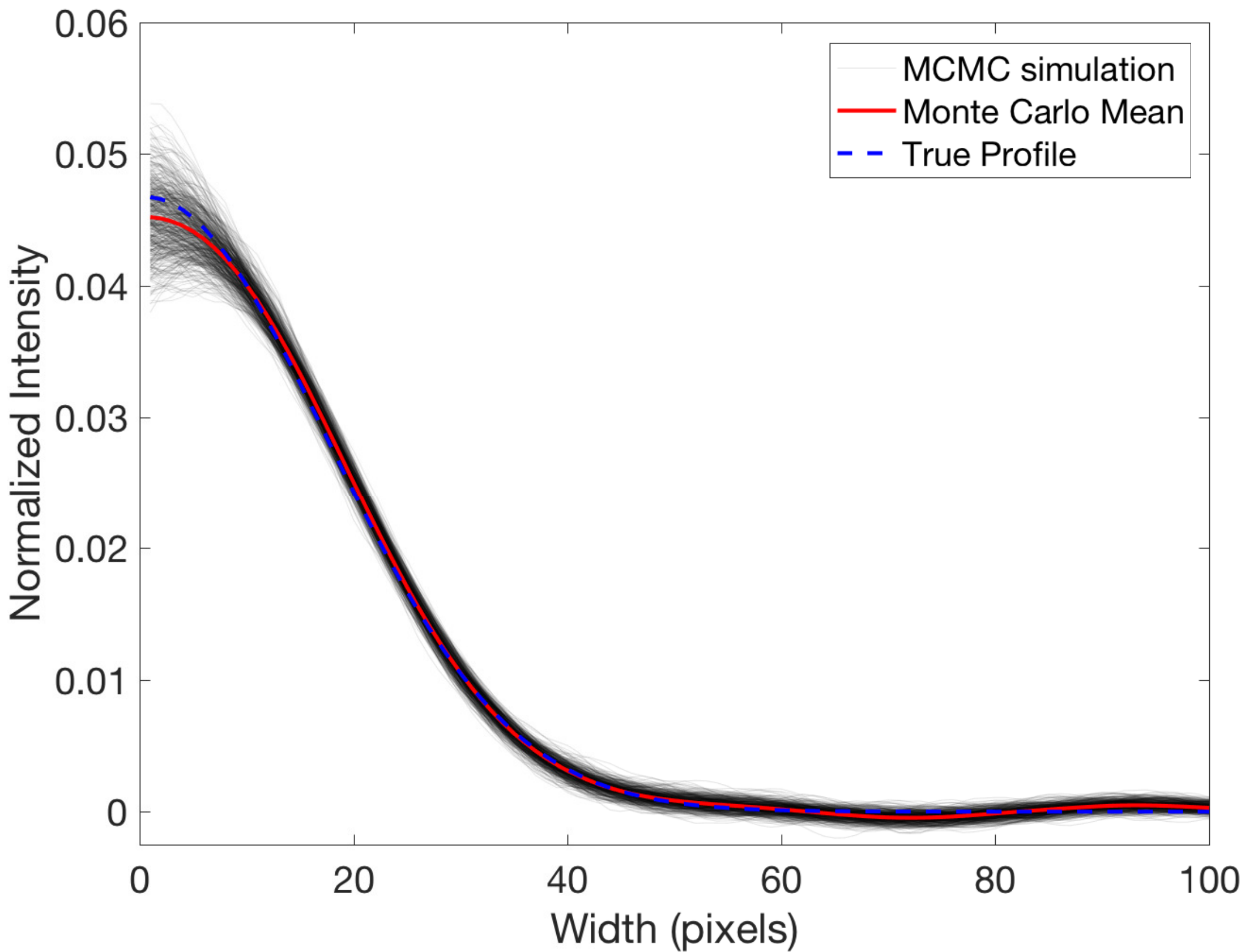}
      \includegraphics[width=.45\textwidth]{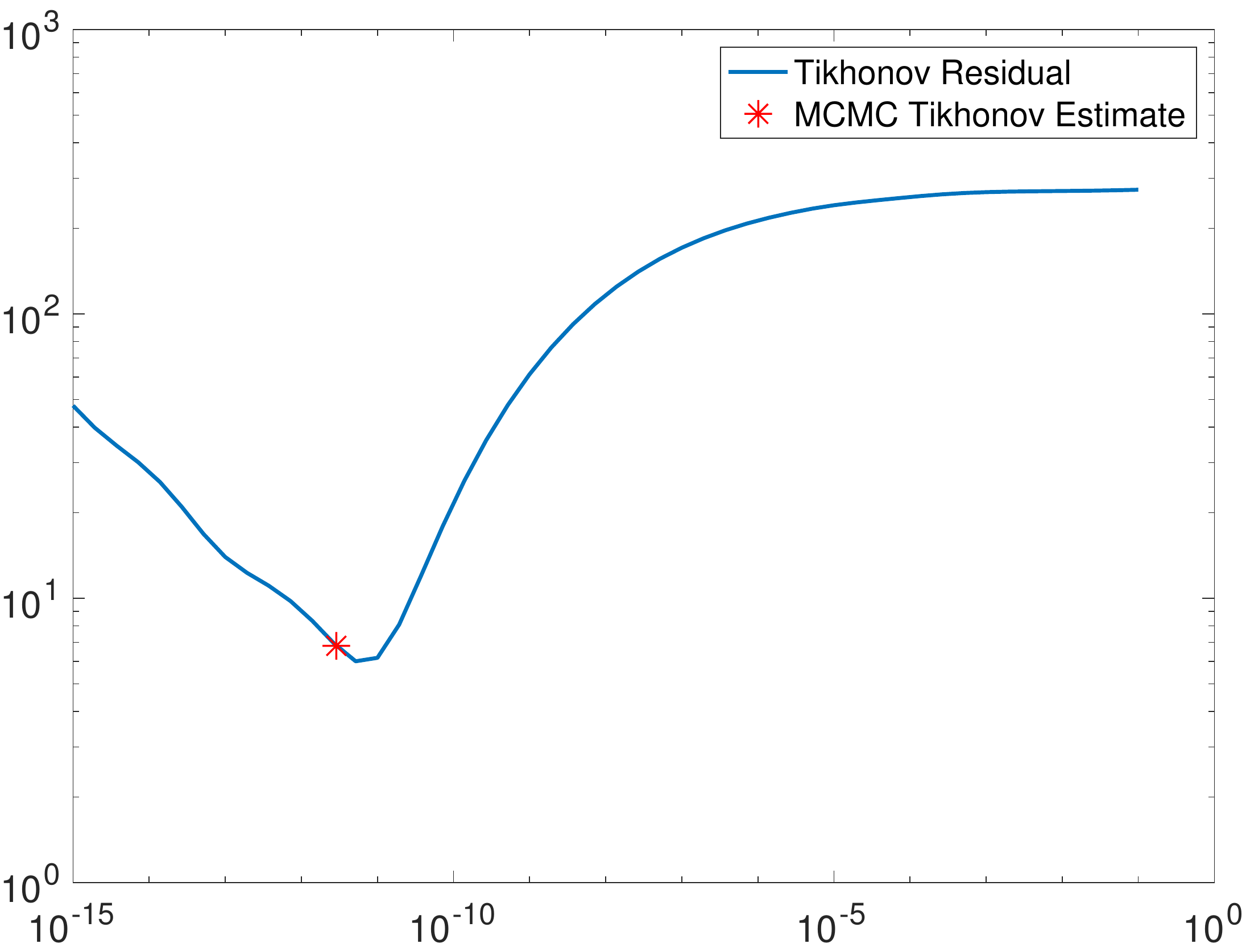}
    \caption{(left) The true solution, the mean MCMC estimate, and $10^3$ Monte Carlo samples are plotted together; (right) the sum of squared residuals of the least square solution verses the corresponding Tikhonov parameter with the MCMC estimate $\hat \delta/\hat \lambda$. } \label{fig:syntheticPsfRecon}
    \end{center}
  \end{figure}

  In the left panel of \Cref{fig:increaseNmh}, note that increases in the Metropolis-Hastings sub-steps of MTC generally decreases the efficiency of the sampler.  
  This is because of the discussion in \Cref{sec:mcmcAlgorithms}, where it was shown that the sampling of $\vect \theta^k$ does not depend on $\vect p^k$ and only on the previous $\vect \theta^{k-1}$.  
  Since the efficiency of sampling $\vect \theta^k$ depends only on the Metropolis-Hastings algorithm with no influence from $\vect p^k$, the difference is merely how the number Cholesky factorizations are accounted for per iteration of the algorithm.
  Said another way, increasing $n_{\rm MH}$ of MTC is equivalent to simulating a chain of $\vect \theta^k$ of length $M \cdot n_{\rm MH}$, with $\vect p^k$ simulated in a chain of length $M$.
  In the case of the PC Gibbs sampler on the other hand (right panel of \Cref{fig:increaseNmh}), extra Metropolis-Hastings steps appear to increase the efficiency of the sampler.
  This is because only $\delta$ has been marginalized, and even though $\lambda_k$ and $\delta_k$ can still be blocked into $\vect \theta^k$, it depends on $\vect p^k$ through $\lambda_k$.  
  This means that the PC Gibbs transition kernel improves with better samples of $\delta_k$ that increased Metropolis-Hastings steps provide.
  \begin{figure}[htbp!]
  \begin{center}
    \includegraphics[width=.45\textwidth]{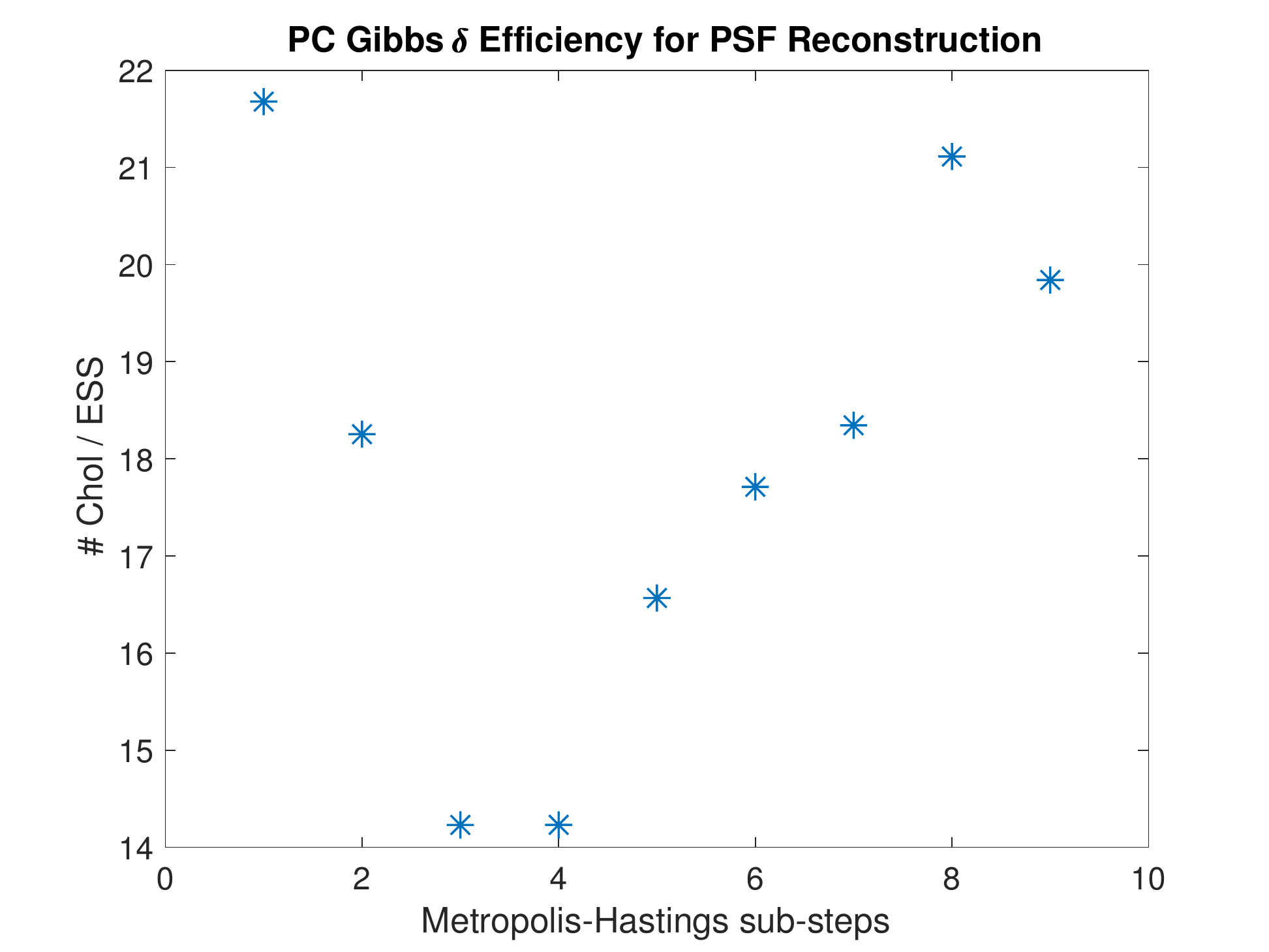}
    \includegraphics[width=.45\textwidth]{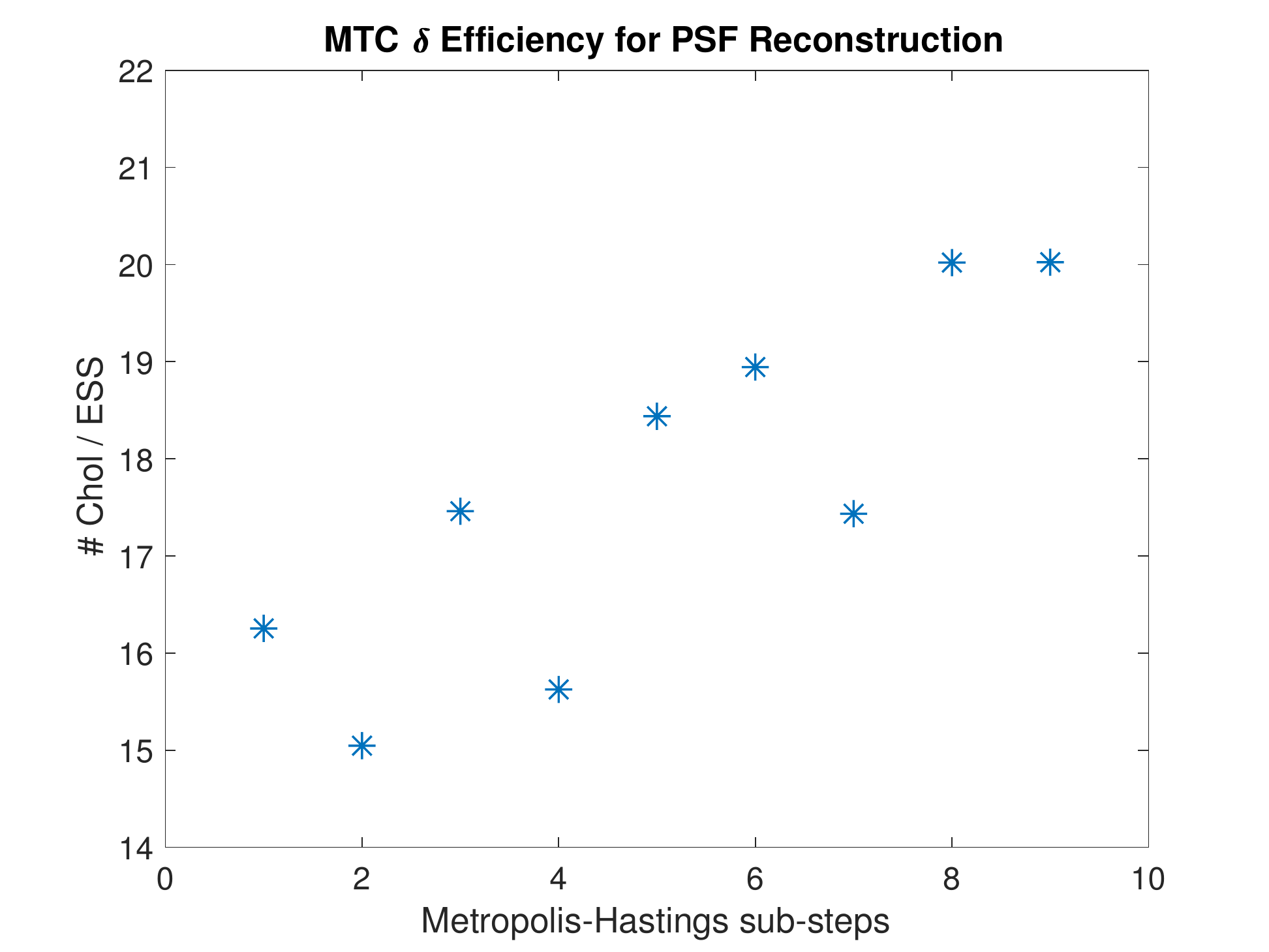} 
  \end{center}
    \caption{The efficiency (\#Chol/ESS) of the $\delta_k$ component verses the number of Metropolis-Hastings substeps for the PC Gibbs algorithm on edge data with $N=797$.}
    \label{fig:increaseNmh}
  \end{figure}

  The efficiency statistics for each algorithm are given in \Cref{tab:syntheticEfficiency}.
  In order to give a common basis for comparison, the proposal parameters for each Metropolis-Hastings random walk in MTC and PC Gibbs were derived from the burned-in posterior samples of the hierarchical Gibbs sampler.
  Specifically, two times the empirical covariance of the burned-in $\vect \theta^k$ from a realization of the hierarchical Gibbs sampler was used as the proposal covariance for MTC.
  Two times the variance of $\delta_k$ from the same realization was used as the proposal variance for PC Gibbs. 
  These choices result in acceptance rates near 0.3 for MTC and 0.45 for PC Gibbs.

  First observe that each sampler generally agrees in terms of the MCMC means generated.  
  As predicted, the $\lambda_k$ sub-chains are all sampled very efficiently in Gibbs and PC Gibbs, while its efficiency is driven by Metropolis-Hastings in MTC.  
  Moreover, the efficiency of the PC Gibbs algorithm with $n_{\rm MH}=4$ is estimated to be slightly more efficient than MTC.
  We remark that due to the variability in the proposal tuning, small differences in efficiency are likely not definitive, but this result serves as evidence that the efficiencies of PC Gibbs and MTC are roughly equivalent.  
  The advantage of PC Gibbs is that its Metropolis-Hastings proposal need only be tuned in one dimension.  

  \begin{table}[htbp!]
  \begin{center}
  \caption{ Statistical diagnostics for the $\lambda$ and $\delta$ chains associated with the synthetic PSF reconstruction problem. 
              The first two columns are the post-burn-in chain means of $\lambda$ and $\delta$. 
              The Metropolis-Hastings proposal acceptance rate is given in the third column and the estimated efficiency of the $\lambda_k$ and $\delta_k$ components are given in the fourth and fifth columns.} \label{tab:syntheticEfficiency}
    \begin{tabular}{l|cccccc}
      \hline
       Algorithm  & $\hat{\lambda}_{\rm MCMC}$& $\hat{\delta}_{\rm MCMC}$  & MCMC       & $\delta$ &  $\lambda$ \\ 
                  & $(\times 10^{4})$         & ($\times 10^{-7}$)         & Acc. Rate  & \#Chol/ESS  & \#Chol/ESS \\ \hline
        Gibbs     & 1.162     & 1.125     &       1.0 &    58.181&        1.1 \\
          MTC     & 1.160     & 0.977     &     0.301 &    16.251&       17.5 \\ \hspace{.2in} $n_{mh}= 1$ & & & & & \\
         PC Gibbs & 1.162     & 1.002     &     0.446 &    21.673&        1.0 \\ \hspace{.2in} $n_{mh}= 1$ & & & & & \\
         PC Gibbs & 1.160     & 1.006     &     0.475 &    14.228&        1.1 \\ \hspace{.2in} $n_{mh}= 4$ & & & & & \\ \hline 
%         PC Gibbs & 1.162     & 1.125     &       1.0 &    58.181&        1.1 \\ \hspace{.2in} $n_{mh}= 5$ & & & & & \\ \hline
%    \end{tabular}
%    \begin{tabular}{l|ccccccc}
%      \hline
%       Algorithm  & $\hat{\lambda}_{\rm MCMC}$& $\hat{\delta}_{\rm MCMC}$  & $\lambda$-$p_{\rm geweke}$&$\delta$-$p_{\rm geweke}$& ESS & \#Chol/ESS \\ & $(\times 10^{4})$ & ($\times 10^{-8}$) & & & & \\ \hline
%         Gibbs & 1.162 & 1.125 & 0.996 & 0.978&  171.9  & 58.2 \\
%           MTC & 1.161 & 0.995 & 0.977 & 0.935&  387.9  & 25.8 \\ \hspace{.2in} $n_{mh}= 1$ & & & & & & & \\
%           MTC & 1.158 & 1.019 & 0.994 & 0.957&  2206.1 & 27.2 \\ \hspace{.2in} $n_{mh}= 5$ & & & & & & & \\
%       PC Gibbs& 1.163 & 1.015 & 0.996 & 0.887&  855.5  & 23.4 \\ \hspace{.2in} $n_{mh}= 1$ & & & & & & & \\
%       PC Gibbs& 1.160 & 1.011 & 0.999 & 0.980&  2907.0 & 20.6 \\ \hspace{.2in} $n_{mh}= 5$ & & & & & & & \\ \hline
    \end{tabular}
  \end{center}
  \end{table}
  \begin{figure}[htbp!]
  \begin{center}
    \includegraphics[width=.45\textwidth]{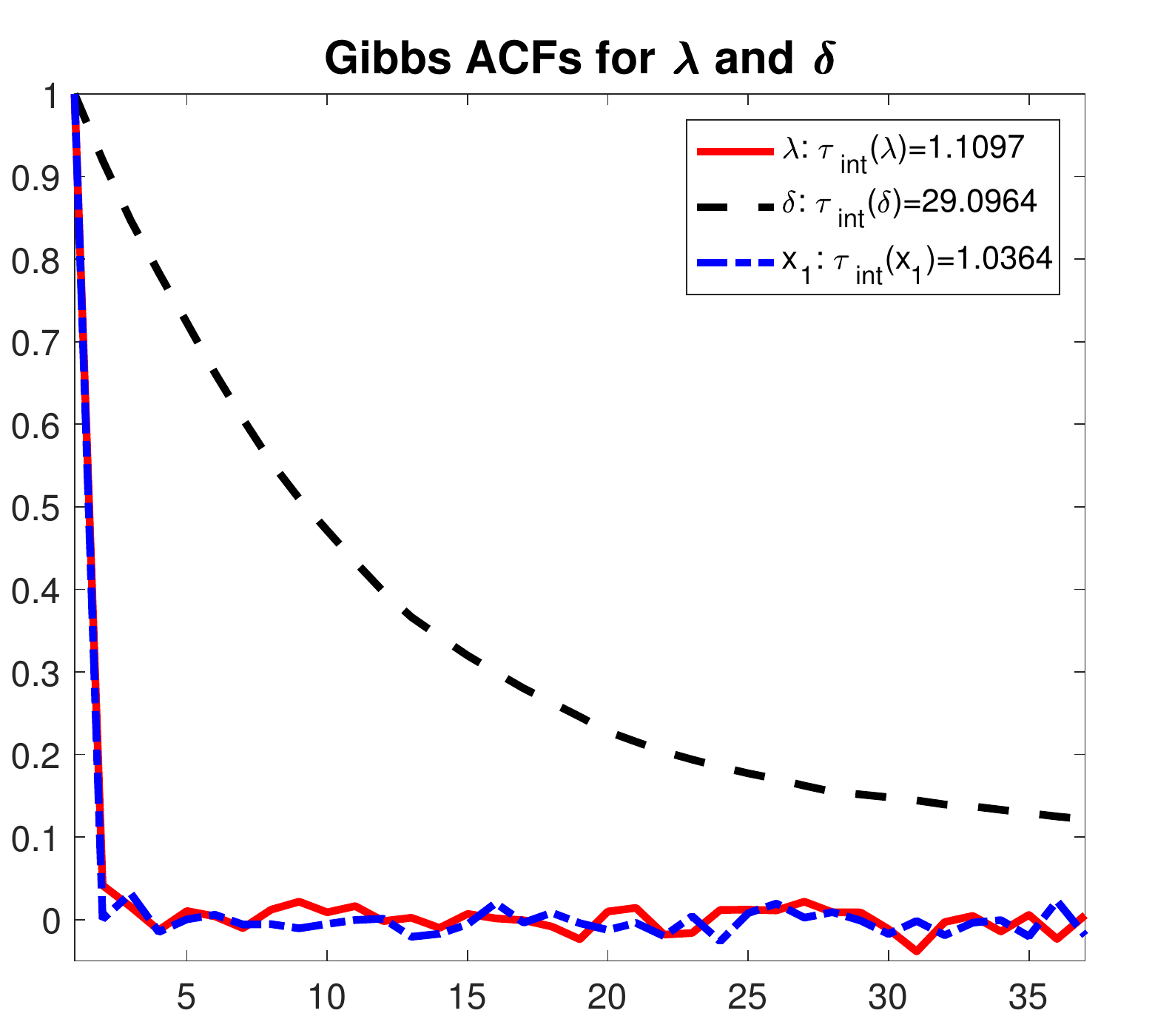}    \includegraphics[width=.45\textwidth]{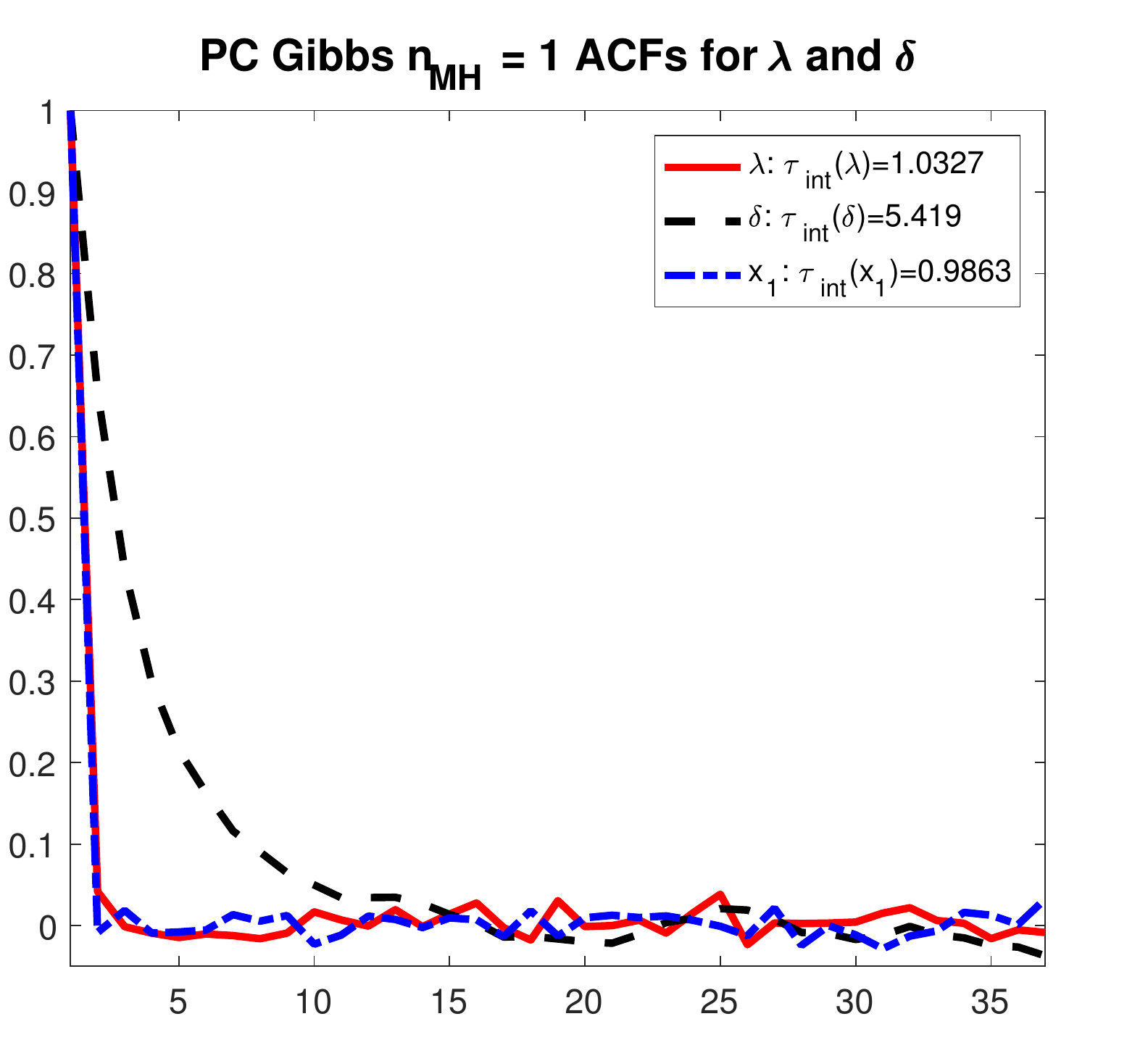}\\\vspace{2em}
    \includegraphics[width=.45\textwidth]{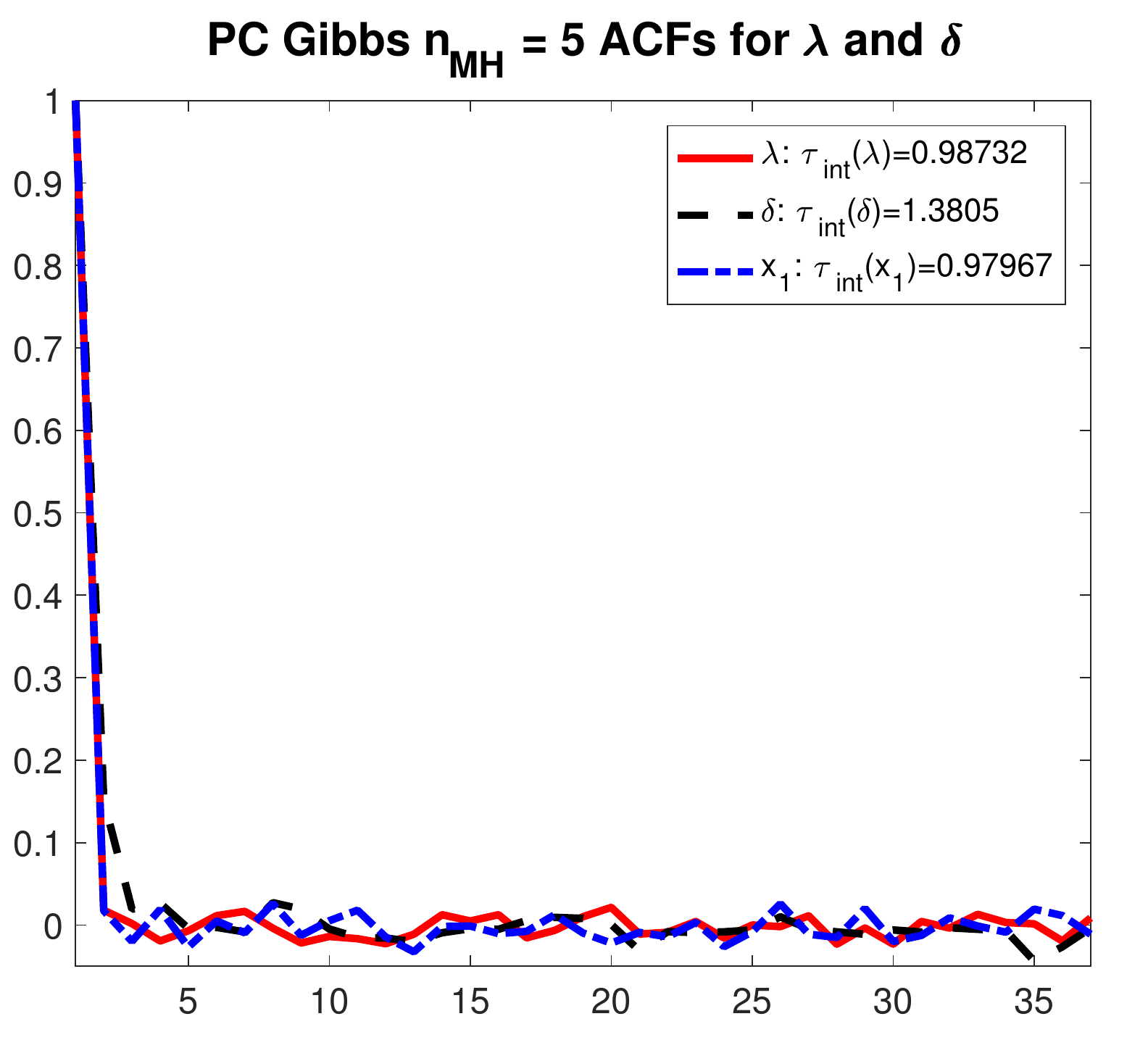}\includegraphics[width=.45\textwidth]{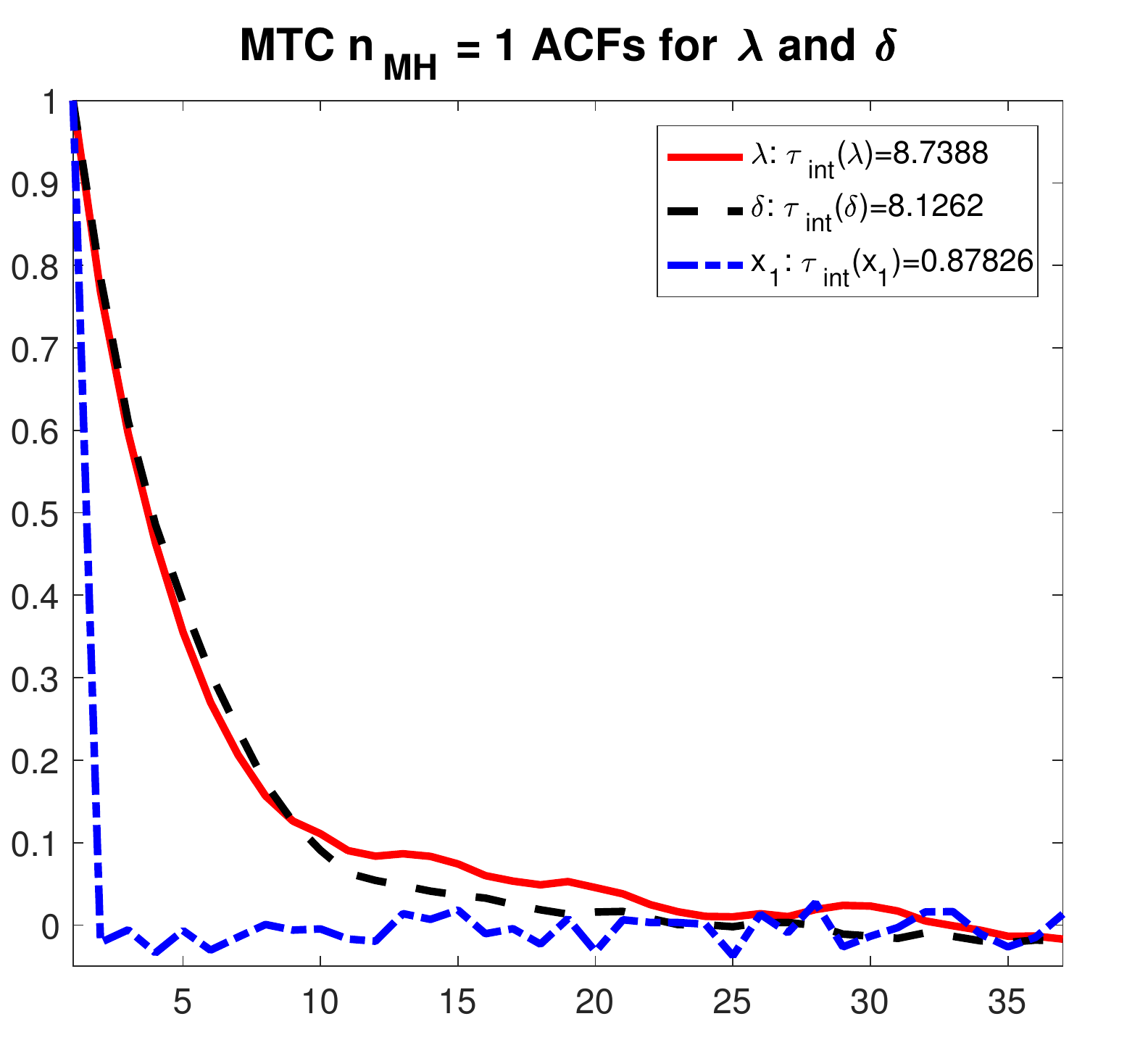}
  \end{center}
  \caption{Autocorrelation plots for PSF reconstruction for synthetic data of the sub-chains for $\lambda$, $\delta$ and the central discretization point of $\vect p$: in the upper-left are the ACF for Markov chains of $\lambda$, $\delta$ and central pixel of the radial profile for the Gibbs sampler; on the upper-right are the plots for the PC Gibbs sampler with 1 inner MH step; on the lower-left are plots for the PC Gibbs sampler with 5 inner MH steps; and in the lower-right are plots for the MTC sampler.} \label{fig:syntheticPsfACF}
  \end{figure}

  \subsection{PSF reconstruction with measured radiographic data}\label{subsec:realData}

    Next we reconstruct the point spread function of a high energy X-ray imaging system at the U.S.~Department of Energy's Nevada National Security Site. 
    The real edge data is shown in \Cref{fig:CygnusPsfRecon} (upper left) along with a horizontal cross-section across the edge (upper right). The mean MCMC reconstruction is shown in \Cref{fig:CygnusPsfRecon} (lower left), along with the 10\%, 25\%, 50\%, 70\%, and 90\% quantiles of the chain $\bm{x}^{k}$.
    We estimated the PSF at grid points using the chain-wise mean after burn-in, $\hat {\vect p }= \frac 2M \sum_{k=M/2+1}^M {\vect p}^{k}$.
    Since the true PSF is unknown, we evaluate the accuracy of the estimation by its discrepancy; i.e. we compared forward mapping of the estimate $\Gb {\hat {\vect p}}$ with the given data $\bm b$.  This is shown in both linear and logarithmic scales in \Cref{fig:CygnusPsfRecon} (lower right). In both cases the discrepancy is quite low, except at very low intensities where the data is dominated by the noise, which can be seen in the logarithmic scale.
    Observe that the chain efficiency statistics in the third through fifth columns of \Cref{tab:CygnusPsfRecon} are similar to those derived on the synthetic example.

  \begin{figure}[ht]
    \begin{center}
    \includegraphics[height=2in]{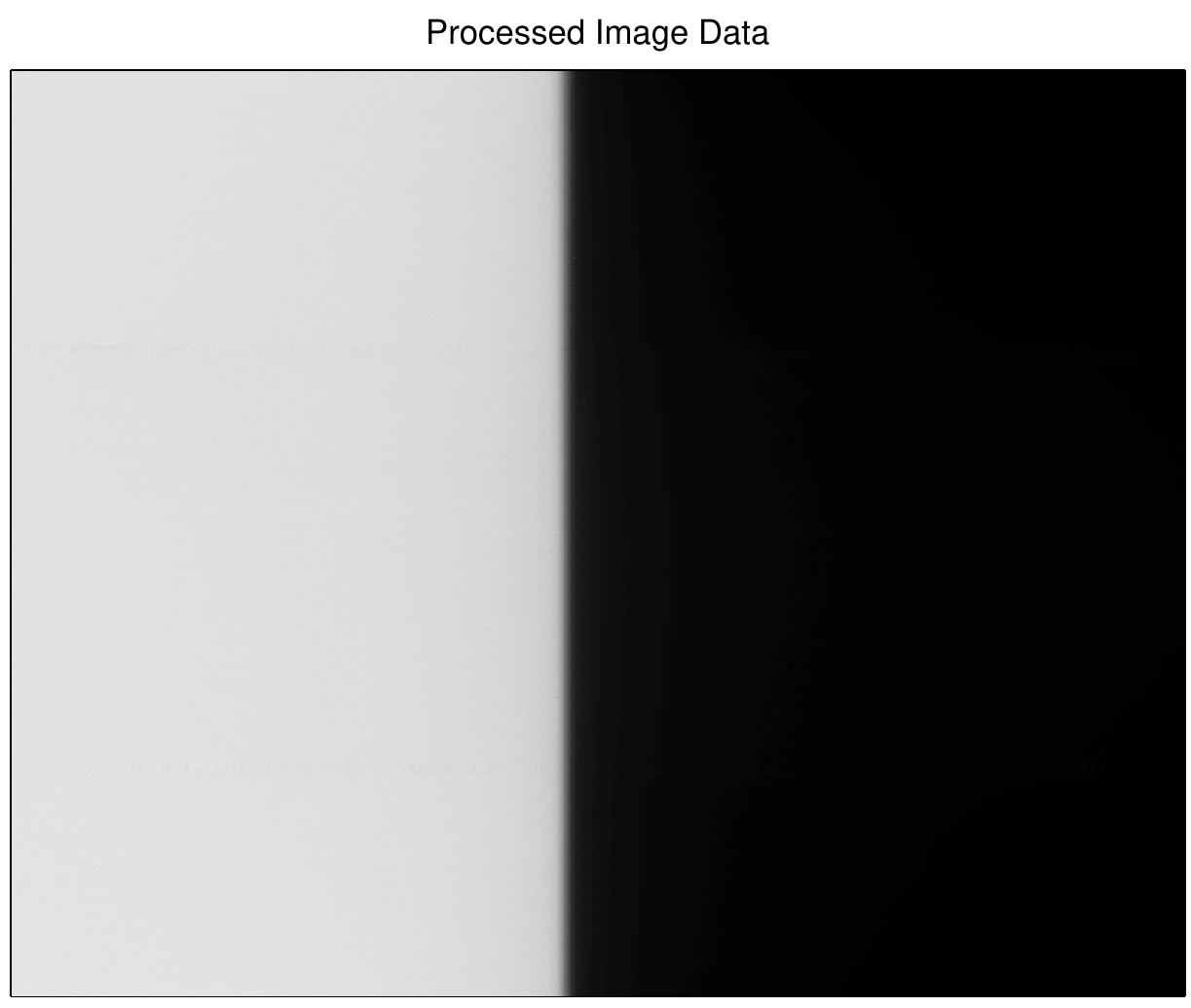}   \hspace{0em}\includegraphics[height=2in]{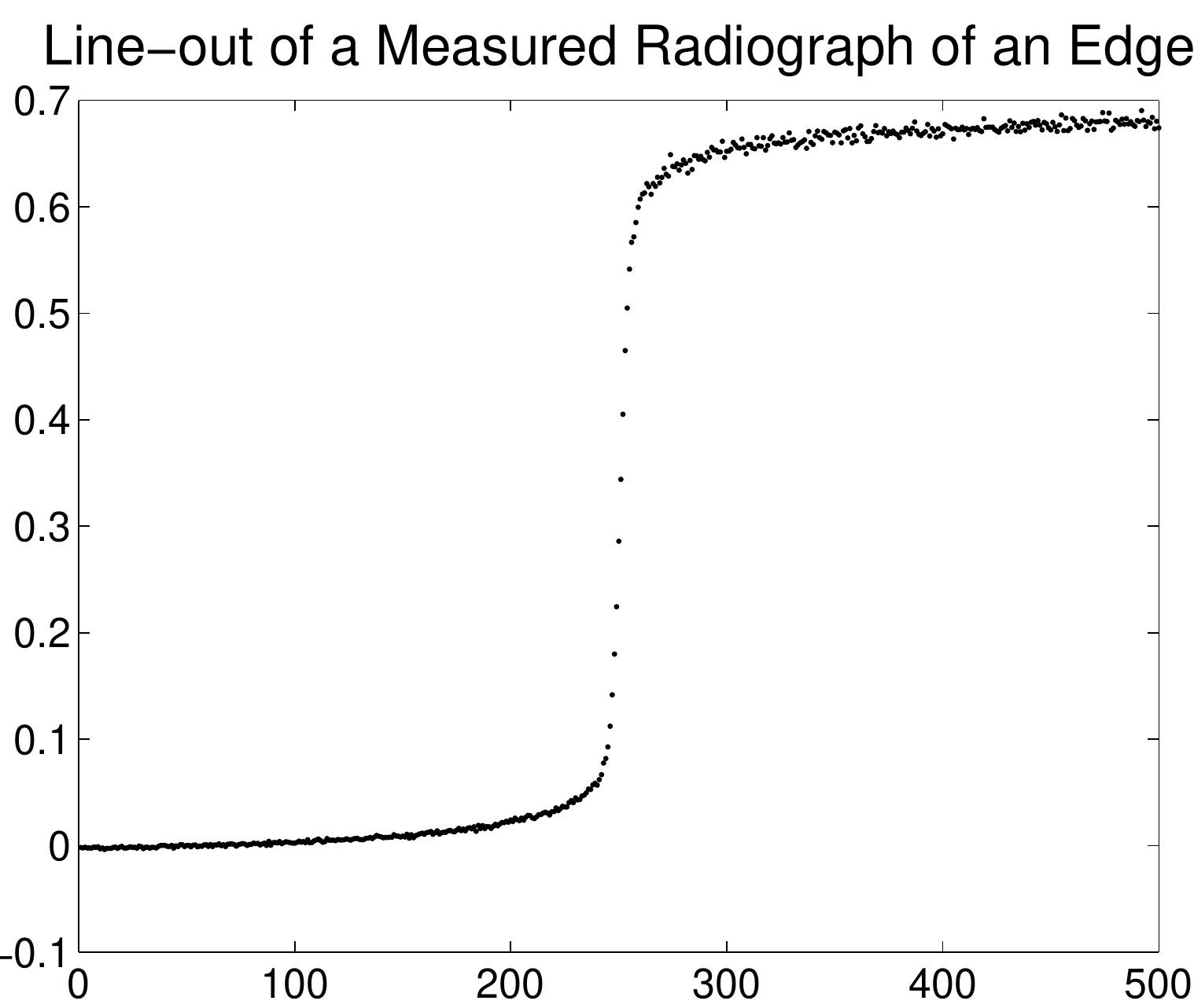}\\\vspace{2em}
    \includegraphics[width=.5\textwidth]{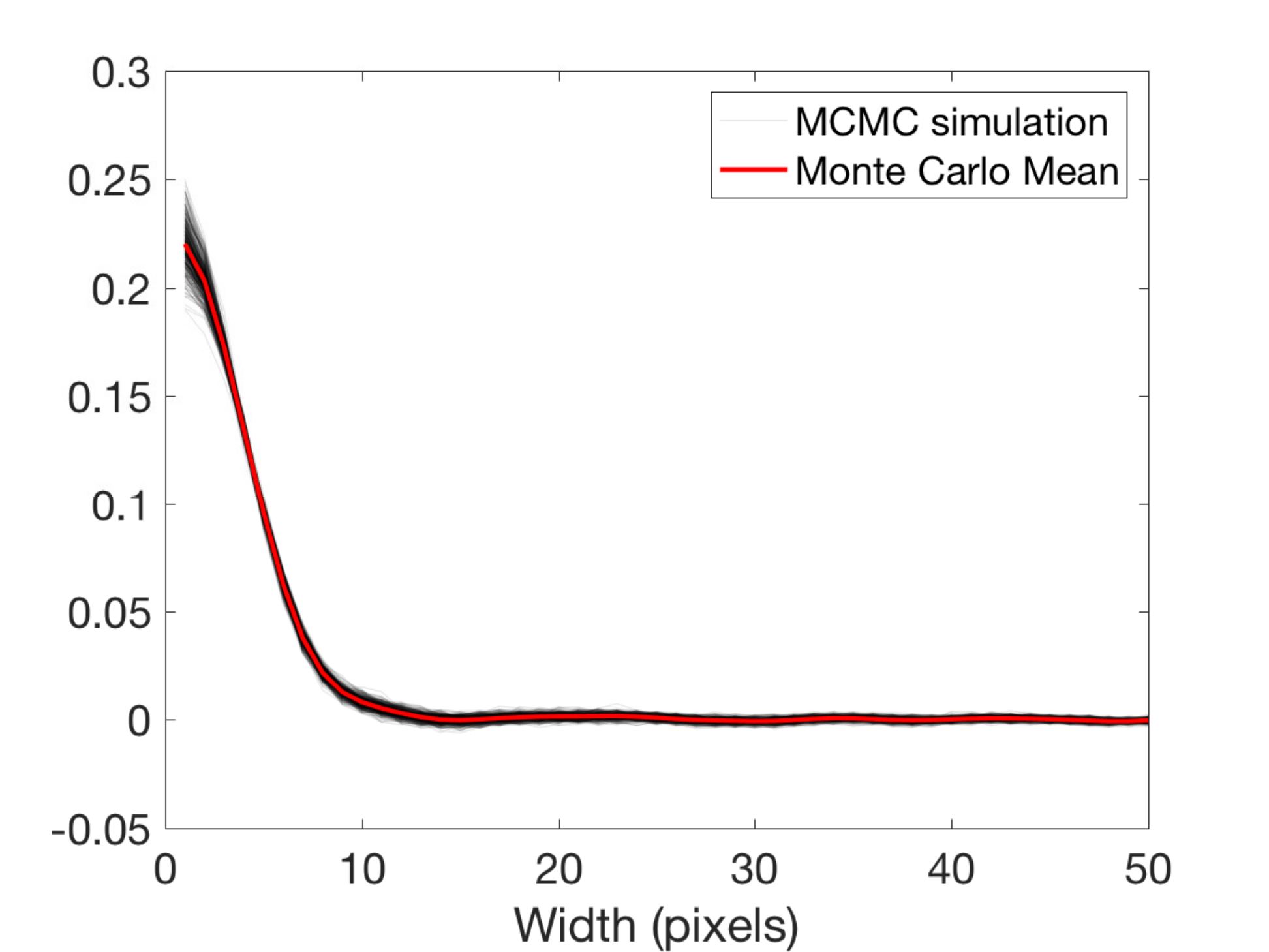}\hspace{0em}\includegraphics[width=.5\textwidth]{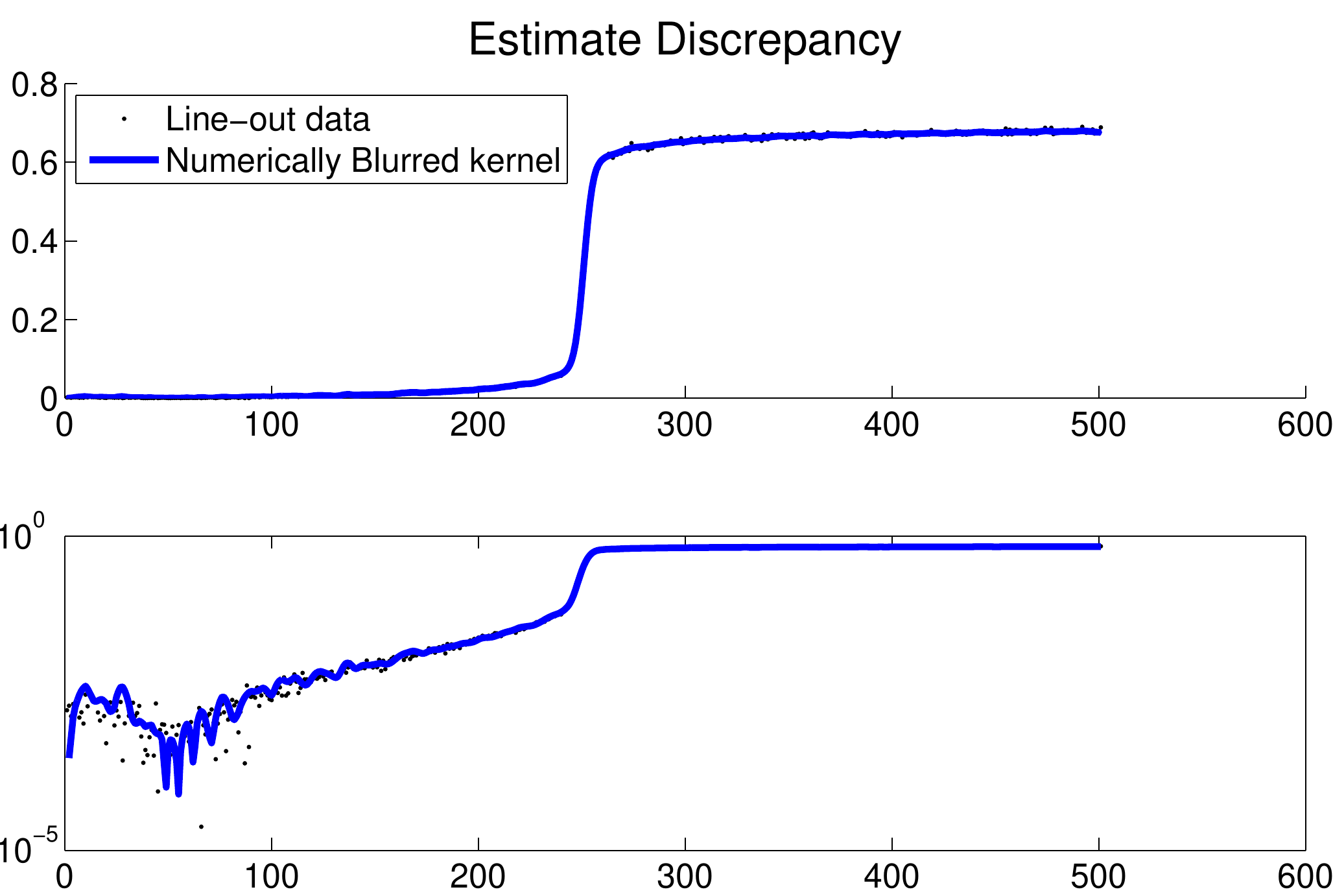}
    \caption{
    PSF reconstructions for radiographic data: in the upper left corner are the radiographic image data;
    in the upper right corner is a line-out taken from the image data;
    in the lower left corner are the central 10\%, 25\%, 50\%, 70\%, and 90\% quantiles of the posterior reconstruction of $\bm x$ for each pixel;
    in the lower right corner are plots of the forward mapped discrepancy of the post burn-in chain mean.
    } \label{fig:CygnusPsfRecon}
    \end{center}
  \end{figure}

  Similar to the synthetic data, the PC Gibbs algorithm with $n_{mh}=4$ and MTC perform with roughly equivalent overall efficiency, however, with $\lambda_k$ being the least efficient component of MTC.

  \begin{table}[htbp!]
  \begin{center}
    \caption{ Statistical diagnostics for the $\lambda$ and $\delta$ chains associated with the PSF reconstruction problem on real radiographic data. 
              The first two columns are the post-burn-in chain means of $\lambda$ and $\delta$. 
              The Metropolis-Hastings proposal acceptance rate is given in the third column and the estimated efficiency of the $\lambda_k$ and $\delta_k$ components are given in the fourth and fifth columns.} \label{tab:CygnusPsfRecon}
    \begin{tabular}{l|ccccc}
       Algorithm  & $\hat{\lambda}_{\rm MCMC}$& $\hat{\delta}_{\rm MCMC}$  & MCMC       & $\delta$ &  $\lambda$ \\ 
                  & $(\times 10^{4})$         & ($\times 10^{-10}$)         & Acc. Rate  & \#Chol/ESS  & \#Chol/ESS \\ \hline
      \hline
         Gibbs  & 9.148     & 1.205     &     1.000 &    36.705&        1.4 \\
           MTC  & 9.117     & 1.141     &     0.308 &    14.524&       16.4 \\ \hspace{.2in} $n_{mh}= 1$ & & & & & \\
       PC Gibbs & 9.148     & 1.152     &     0.442 &    21.092&        1.3 \\ \hspace{.2in} $n_{mh}= 1$ & & & & & \\
       PC Gibbs & 9.149     & 1.150     &     0.452 &    15.535&        1.5 \\ \hspace{.2in} $n_{mh}= 5$ & & & & & \\ \hline 
       %PC Gibbs& 9.156     & 1.145     &     0.460 &    17.063&        1.5 \\ \hspace{.2in} $n_{mh}= 5$ & & & & & \\ \hline
      \hline
    \end{tabular}
  \end{center}
  \end{table}

\section{Conclusions} \label{sec:conclusions}

  PSF reconstruction provides an excellent medium scale inverse problem to test state-of-the-art MCMC algorithms for posterior estimation.
  This work shows how modifying the hierarchical Gibbs sampler first presented in \cite{bardsley2012mcmc} can result in the MTC algorithm which is equivalent to the one derived in \cite{fox2015fast} and the hierarchical PC Gibbs algorithm.
  Both methods have their advantages: MTC having $\vect \theta$ decoupled from $\vect p$ makes it possible to sample $\vect p^k$ at the rate of the integrated autocorrelation time and makes analysis of the algorithm easier; where PC Gibbs is a straight-forward modification of the Hierarchical Gibbs algorithm with an easily tuned one-dimensional Metropolis-Hastings step, which can easily be tuned to be very efficient.
  We have provided statistical evidence that both algorithms are essentially equivalent for PSF reconstruction in terms of an estimator that measures computational effort per uncorrelated sample.

  This work contributes two novel aspects to the relevant literature.
  First is in the application of PSF reconstruction to X-ray imaging, which to our knowledge, has not appeared elsewhere in the inverse problems literature.
  This involved a novel approach to incorporating radial symmetry in prior modeling the PSF with a Gauss-Markov random field.
  The results illustrate the effectiveness of a sample-based approach on real data for uncertainty quantification.  
  The second contribution of this work is in new advances of addressing the autocorrelated $\delta$ component of a hierarchical Gibbs sampler in \cite{bardsley2012mcmc}. 
  The work builds upon \citep{agapiou2013aspects,agapiou2014analysis} by collapsing only the $\delta_k$ component of Gibbs and retaining the efficiency in sampling $\lambda_k$ gained by its dependence on $\vect p^k$.
  We showed that this fits into the framework of partial collapse presented in \cite{van2008partially}, and heeding their warnings of creating an improper sampler, we prove that our algorithm is still invariant with respect to the desired posterior density.
  Finally, MCMC methods were verified using both a synthetic test case and real data. 
  
  The PC Gibbs sampler is readily adapted to other linear Bayesian inverse problems modelled hierarchically and has potential applications to more general prior modeling (i.e., non-conjugate priors).
  In general, this work provides evidence that in models with a Gaussian noise likelihood, it is advantageous to employ an MCMC transition that retains the dependence between the parameter defining the likelihood and the data, rather than completely decoupling them.

\section*{Acknowledgments}
  The authors would like to thank Peter Golubstov for helpful comments and suggestions on the work and manuscript.  
This manuscript has been authored by National Security Technologies, LLC, under Contract No. DE-AC52-06NA25946 with the U.S.~Department of Energy, National Nuclear Security Administration, [NNSA Subprogram Office funding source]. The United States Government retains and the publisher, by accepting the article for publication, acknowledges that the United States Government retains a non-exclusive, paid-up, irrevocable, worldwide license to publish or reproduce the published form of this manuscript, or allow others to do so, for United States Government purposes. The U.S.~Department of Energy will provide public access to these results of federally sponsored research in accordance with the DOE Public Access Plan (\url{http://energy.gov/downloads/doe-public-access-plan}). The views expressed in the article do not necessarily represent the views of the U.S.~Department of Energy or the United States Government. DOE/NV/25946--3373

  \bibliographystyle{siamplain}
  \bibliography{partially_collapsed_gibbs}

\begin{thebibliography}{10}

\bibitem{acosta2014markov}
{\sc F.~Acosta, M.~L. Huber, and G.~L. Jones}, {\em Markov chain monte carlo
  with linchpin variables}, Preprint,  (2014).

\bibitem{agapiou2013aspects}
{\sc S.~Agapiou}, {\em Aspects of {B}ayesian inverse problems}, PhD thesis,
  University of Warwick, 2013.

\bibitem{agapiou2014analysis}
{\sc S.~Agapiou, J.~M. Bardsley, O.~Papaspiliopoulos, and A.~M. Stuart}, {\em
  Analysis of the {G}ibbs sampler for hierarchical inverse problems}, SIAM/ASA
  Journal on Uncertainty Quantification, 2 (2014), pp.~511--544.

\bibitem{bardsley2012mcmc}
{\sc J.~M. Bardsley}, {\em {MCMC}-based image reconstruction with uncertainty
  quantification}, SIAM Journal on Scientific Computing, 34 (2012),
  pp.~A1316--A1332.

\bibitem{bardsley2016metropolis}
{\sc J.~M. Bardsley and A.~Luttman}, {\em A {M}etropolis-{H}astings method for
  linear inverse problems with {P}oisson likelihood and {G}aussian prior},
  International Journal of Uncertainty Quantification,  (2016).

\bibitem{calvetti2007introduction}
{\sc D.~Calvetti and E.~Somersalo}, {\em An Introduction to {B}ayesian
  Scientific Computing: Ten Lectures on Subjective Computing}, vol.~2, Springer
  Science \& Business Media, 2007.

\bibitem{fowler2016stochastic}
{\sc M.~J. Fowler, M.~Howard, A.~Luttman, S.~E. Mitchell, and T.~J. Webb}, {\em
  A stochastic approach to quantifying the blur with uncertainty estimation for
  high-energy {X}-ray imaging systems}, Inverse Problems in Science and
  Engineering, 24 (2016), pp.~353--371.

\bibitem{fox2015fast}
{\sc C.~Fox and R.~A. Norton}, {\em Fast sampling in a linear-{G}aussian
  inverse problem}, SIAM/ASA Journal on Uncertainty Quantification, 4 (2016),
  pp.~1191--1218.

\bibitem{gamerman2006markov}
{\sc D.~Gamerman and H.~F. Lopes}, {\em {M}arkov Chain {M}onte {C}arlo:
  Stochastic Simulations for {B}ayesian Inference}, Chapman \& Hall CRC, 2006.

\bibitem{gelman2014bayesian}
{\sc A.~Gelman, J.~B. Carlin, H.~S. Stern, and D.~B. Rubin}, {\em {B}ayesian
  Data Analysis}, vol.~2, Taylor \& Francis, 2014.

\bibitem{geweke1991evaluating}
{\sc J.~Geweke}, {\em Evaluating the Accuracy of Sampling-Based Approaches to
  the Calculation of Posterior Moments}, vol.~196, Federal Reserve Bank of
  Minneapolis, Research Department Minneapolis, MN, USA, 1991.

\bibitem{grafakos2014}
{\sc L.~Grafakos}, {\em Classical Fourier Analysis}, Graduate Texts in
  Mathematics, Springer New York, 2014.

\bibitem{hasen2006}
{\sc P.~Hansen, J.~Nagy, and D.~O'Leary}, {\em Deblurring Images: Matrices,
  Spectra, and Filtering}, SIAM, Philadelphia, 2006.

\bibitem{hansen2010}
{\sc P.~C. Hansen}, {\em Discrete Inverse Problems: Insight and Algorithms},
  SIAM, Philadelphia, 2010.

\bibitem{higdon2006primer}
{\sc D.~Higdon}, {\em A primer on space-time modeling from a {B}ayesian
  perspective}, Monographs on Statistics and Applied Probability, 107 (2006),
  p.~217.

\bibitem{howard2016bayesian}
{\sc M.~Howard, M.~Fowler, A.~Luttman, S.~Mitchell, and M.~C. Hock}, {\em
  {B}ayesian {A}bel inversion in qualitative {X}-ray radiography}, SIAM Journal
  on Scientific Computing, 38 (2016), pp.~B396--B413.

\bibitem{howard2014samplilng}
{\sc M.~Howard, A.~Luttman, and M.~Fowler}, {\em Sampling-based uncertainty
  quantification in deconvolution of {X}-ray radiographs}, Journal of
  Computational and Applied Mathematics, 270 (2014), pp.~43--51.

\bibitem{jain1989}
{\sc A.~Jain}, {\em Fundamentals of Digital Image Processing}, Prentice-Hall
  information and system sciences series, Prentice Hall, 1989.

\bibitem{joyce2016psf}
{\sc K.~Joyce}, {\em Point Spread Function Estimation and Uncertainty
  Quantification}, PhD thesis, University of Montana, 2016.

\bibitem{kaipo2005}
{\sc J.~Kaipio and E.~Somersalo}, {\em Statistical and Computational Methods
  for Inverse Problems}, Springer, 2005.

\bibitem{marsaglia2000gamma}
{\sc G.~Marsaglia and W.~W. Tsang}, {\em A simple method for generating gamma
  variables}, ACM Transactions on Mathematical Software (TOMS), 26 (2000),
  pp.~363--372.

\bibitem{morton2005numerical}
{\sc K.~W. Morton and D.~F. Mayers}, {\em Numerical Solutions of Partial
  Differential Equations: An Introduction}, Cambridge university press, 2005.

\bibitem{robert2013monte}
{\sc C.~Robert and G.~Casella}, {\em Monte {C}arlo Statistical Methods},
  Springer Science \& Business Media, 2013.

\bibitem{roggemann1996imaging}
{\sc M.~C. Roggemann and B.~Welsh}, {\em Imaging Through Turbulence}, CRC
  press, 1996.

\bibitem{rue2005gaussian}
{\sc H.~Rue and L.~Held}, {\em Gaussian Markov Random Fields: Theory and
  Applications}, CRC Press, 2005.

\bibitem{sokal1997monte}
{\sc A.~Sokal}, {\em Monte carlo methods in statistical mechanics: foundations
  and new algorithms}, in Functional Integration, Springer, 1997, pp.~131--192.

\bibitem{stuart2010}
{\sc A.~M. Stuart}, {\em Inverse problems: {A} {B}ayesian perspective}, Acta
  Numerica, 19 (2010), pp.~451--559.

\bibitem{van2015metropolis}
{\sc D.~A. Van~Dyk and X.~Jiao}, {\em {M}etropolis-{H}astings within partially
  collapsed {G}ibbs samplers}, Journal of Computational and Graphical
  Statistics, 24 (2015), pp.~301--327.

\bibitem{van2008partially}
{\sc D.~A. Van~Dyk and T.~Park}, {\em Partially collapsed {G}ibbs samplers:
  {T}heory and methods}, Journal of the American Statistical Association, 103
  (2008), pp.~790--796.

\bibitem{vogel2002}
{\sc C.~R. Vogel}, {\em Computational Methods of Inverse Problems}, Society for
  Industrual and Applied Mathematics, 2002.

\end{thebibliography}

\end{document}